\theoremstyle{plain} 
\newtheorem{theorem}{\indent\sc Theorem}[section]
\newtheorem{lemma}[theorem]{\indent\sc Lemma}
\newtheorem{proposition}[theorem]{\indent\sc Proposition}
\theoremstyle{definition} 
\newtheorem{remark}[theorem]{\indent\sc Remark}
\newtheorem{question}[theorem]{\indent\sc Question}
\newcommand{\ve}{\varepsilon}
\newcommand{\vp}{\varphi}
\begin{document}

\pagestyle{plain}
\thispagestyle{plain}

\title[Neighborhood of a rational curve with an ordinary cusp]
{Neighborhood of a rational curve with an ordinary cusp}

\author[Takayuki Koike and Takato Uehara]{Takayuki Koike$^{1}$ and Takato Uehara$^{2}$}
\address{ 
$^{1}$ Department of Mathematics \\
Graduate School of Science \\
Osaka Metropolitan University \\
3-3-138 Sugimoto \\
Osaka 558-8585 \\
Japan 
}
\email{tkoike@omu.ac.jp}
\address{
$^{2}$ Department of Mathematics \\
Faculty of Science \\
Okayama University \\
1-1-1, Tsushimanaka \\
Okayama, 700-8530 \\
Japan
}
\email{takaue@okayama-u.ac.jp}
\renewcommand{\thefootnote}{\fnsymbol{footnote}}
\footnote[0]{ 
2010 \textit{Mathematics Subject Classification}.
32C25, 32J15. 
}
\footnote[0]{ 
\textit{Key words and phrases}.
Neighborhood of subvarieties, Ueda theory, a rational curve with an ordinary cusp. 
}
\renewcommand{\thefootnote}{\arabic{footnote}}

\begin{abstract}
We investigate complex analytic properties of a neighborhood of a reduced rational curve with an ordinary cusp embedded in  a non-singular complex surface whose self-intersection number is zero. 
\end{abstract}

\maketitle

\section{Introduction}\label{section:introduction}

In this paper, we investigate complex analytic properties of a neighborhood of a compact reduced complex curve $C$ of a non-singular complex surface $S$ when it is a rational curve with an ordinary cusp. 
As it is known that $C$ admits a fundamental system of strongly pseudoconvex neighborhoods when the self-intersection number $(C^2)$ of $C$ is negative \cite{G} and of strongly pseudoconcave neighborhoods when $(C^2)$ is positive \cite{U}, 
we investigate the case of $(C^2)=0$: i.e. when the normal bundle $N_{C/S}:=j^*[C]$ is topologically trivial, where $[C]$ is the holomorphic line bundle on $S$ which corresponds to the divisor $C$ and $j\colon C\to S$ is the inclusion. 

Neighborhoods of a curve $C\subset S$ with $(C^2)=0$ is first investigated by Ueda for $C$ non-singular in \cite{U83}. 
When $C$ is non-singular, any topologically trivial holomorphic line bundle on $C$ admits a unitary flat structure (a theorem of Kashiwara, see \cite[\S 1]{U83} for example). 
By focusing on this fact, he classified the situation in accordance with the unitary-linearlizability of the transition functions of local defining functions of $C$, whose obstruction in $n$-jet is represented by {\it $n$-th Ueda class} $u_n(C, S)\in H^1(C, \mathcal{O}_C(N_{C/S}^{-n}))$ for an integer $n>0$. 
When $u_n(C, S)\not=0$ for a positive integer $n$ (i.e. the transition functions of local defining functions of $C$ cannot be linearized even formally), the pair $(C, S)$ is said to be {\it of finite type}. 
Otherwise, the pair $(C, S)$ is said to be {\it of infinite type}. 
In \cite{U83}, (strongly) plurisubharmonic functions on $V\setminus C$ are closely investigated for a neighborhood $V$ of $C$ when $(C, S)$ is of finite type, 
and some sufficient conditions for the unitary-linearlizability of the transition functions of local defining functions of $C$ is given when $(C, S)$ is of infinite type. 
He also showed the existence of $(C, S)$ of infinite type such that the transition functions of local defining functions of $C$ are {\it not} unitary-linearizable. 
See \cite{GS} and \cite{K20} for studies towards higher (co-)dimensional generalizations of Ueda's results for infinite type pairs. 

For $C$ singular, $N_{C/S}$ need not be unitary flat even under the assumption that it is topologically trivial. 
For the case where $C$ admits only nodes as singular points, neighborhoods of a rational curve with a node was first investigated by Ueda in \cite{U91}. 
In \cite{K17}, some generalizations of the results in \cite{U83} and \cite{U91} were made by the first author mainly for nodal curves when the dual graphs are either trees or cycles. 
On the other hand, the case where $C$ has singular points other than nodes does not seem to be well-understood. 

In the present paper, we investigate the case where $C$ is a rational curve with a cusp. 
More precisely, we let $S$ be a non-singular surface and $C\subset S$ be a reduced compact curve with topologically trivial normal bundle which is {\it a rational curve with an ordinary cusp}: i.e. it is isomorphic to the curve $\{[x; y; z]\in \mathbb{P}^2 \mid x^3-y^2z=0\}$, where $\mathbb{P}^2$ denotes the complex projective plane. 
Note that the group ${\rm Pic}^0(C)$ of topologically trivial holomorphic line bundles on $C$ is isomorphic to $\mathbb{C}$, and that $N_{C/S}\in {\rm Pic}^0(C)$ admits a unitary flat structure if and only if it is holomorphically trivial (see Lemma \ref{lem:Pic0}). 
When $N_{C/S}$ is holomorphically trivial, we will see that Ueda classes $u_n(C, S)\in H^1(C, \mathcal{O}_C)$ can be defined in almost the same manner as in \cite{U83}, by using which we classify the pair $(C, S)$ into {\it finite type} and {\it infinite type} (see \S \ref{section:obst} in the present paper for the precise definition). 
In accordance with this classification, we show the following: 
\begin{theorem}\label{thm:main}
Let $S$ be a non-singular surface and $C\subset S$ be a (reduced) rational curve with an ordinary cusp whose normal bundle is topologically trivial. 
Then the following holds. 
\begin{enumerate}
\item[$(i)$] If $N_{C/S}$ is holomorphically trivial and the pair $(C, S)$ is infinite type, there exists a neighborhood $V$ of $C$ in $S$ and an elliptic fibration $f\colon V \to \Delta$ onto the unit disc $\Delta\subset \mathbb{C}$ such that $C$ is a fiber of $f$. \\
\item[$(ii)$] If either $N_{C/S}$ is not holomorphically trivial or ($N_{C/S}$ is holomorphically trivial and) the pair $(C, S)$ is finite type, there exists a positive rational number $\ell\in\textstyle\frac{1}{6}\mathbb{Z}$ such that the following assertions $(1)$ and $(2)$ hold: \\
\begin{enumerate}
\item[$(1)$] For each real number $\lambda>1$, there exists a neighborhood $V$ of $C$ and a strongly plurisubharmonic function $\Phi_\lambda\colon V\setminus C\to \mathbb{R}$ such that $\Phi_\lambda(p)$ increases with the same order as $|f_C(p)|^{-\lambda \ell}$ when $p$ approaches $C$ on a neighborhood of each point of $C$, where $f_C$ is a local (holomorphic) defining function of $C$. 
In particular, $C$ has a fundamental system of strongly pseudoconcave neighborhoods. \\
\item[$(2)$] Let $V$ be a neighborhood of $C$ and $\Psi$ be a plurisubharmonic function on $V\setminus C$. If there exists a real number $0<\lambda<1$ such that $\Psi(p)=o(|f_C(p)|^{-\lambda\ell})$ holds on a neighborhood of each point of $C$, then $\Psi|_{V_0\setminus C}$ is a constant function for a neighborhood $V_0$ of $C$ in $V$. 
\end{enumerate}
\end{enumerate}
\end{theorem}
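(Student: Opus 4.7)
The plan is to adapt Ueda's strategy from \cite{U83} to the cuspidal setting, where the normalization $\nu\colon \widetilde{C}=\mathbb{P}^{1}\to C$ contracts a single preimage $p_{0}\in\widetilde{C}$ to the cusp $q_{0}\in C$. First I would fix a finite open cover $\{U_{\alpha}\}$ of $C$ by coordinate patches, with one distinguished chart $U_{0}$ containing the cusp modeled on $\{x^{3}-y^{2}z=0\}$ via the standard parametrization $t\mapsto(t^{2},t^{3})$, together with a local holomorphic defining function $w_{\alpha}$ of $C$ on a tubular neighborhood of each $U_{\alpha}$ in $S$. The obstruction theory recalled in Section \ref{section:obst} then lets me inductively linearize the transition cocycle $w_{\beta}=f_{\alpha\beta}\cdot w_{\alpha}+O(w_{\alpha}^{n+1})$ up to order $n$ whenever the first $n-1$ Ueda classes vanish, with finiteness of type exactly measuring where this inductive procedure breaks.

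For part $(i)$, I would use the infinite-type hypothesis to carry out the Ueda linearization at every order. The obstructions live in $H^{1}(C,\mathcal{O}_{C})\cong\mathbb{C}$, which is one-dimensional because the arithmetic genus of $C$ equals one; once each is killed, one obtains formal defining coordinates $W_{\alpha}$ with $W_{\beta}=W_{\alpha}$ on overlaps, and the convergence argument of \cite{U83} (geometric-series bounds enabled by the holomorphic triviality of $N_{C/S}$) upgrades these to a genuine holomorphic function $f\colon V\to\Delta$ whose zero fiber is $C$. Since $C$ is a cuspidal rational curve with $(C^{2})=0$, it is a Kodaira type II fiber, so $f$ is automatically an elliptic fibration after possibly shrinking $\Delta$.

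For part $(ii)$, I would extract either a nonvanishing Ueda class $u_{n}(C,S)\in H^{1}(C,\mathcal{O}_{C})$, or the non-triviality of $N_{C/S}$ at order zero, to produce an explicit plurisubharmonic potential. The crucial local computation is on $U_{0}$: a defining function $w_{0}$ pulls back through $\nu$ to a section whose vanishing orders sit in the value semigroup $\langle 2,3\rangle$ of the cusp, whose gaps and conductor force the denominator $6$. Working on $\widetilde{C}$ with the pullback coordinate $t$ and balancing the growth $|w_{0}|^{-\lambda\ell}$ across charts, I would build $\Phi_{\lambda}$ as a regularized maximum of local candidates $c_{\alpha}\,|W_{\alpha}|^{-\lambda\ell}$, using the $n$-th order matching of the $W_{\alpha}$ to guarantee strong plurisubharmonicity on $V\setminus C$; the exponent $\ell\in\frac{1}{6}\mathbb{Z}$ is exactly what is forced by the ratio between the vanishing orders of the $w_{\alpha}$ and of a chosen representative of the Ueda cocycle. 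Statement $(2)$ would then follow by a maximum-principle argument comparing $\Psi$ with the family $\Phi_{\lambda}$: if $\Psi=o(|w_{0}|^{-\lambda\ell})$ for some $\lambda<1$, then $\Psi-\varepsilon\Phi_{\mu}$ is bounded above for suitable $\varepsilon>0$ and $\mu\in(\lambda,1)$, and letting $\varepsilon\to 0$ forces $\Psi$ to extend plurisubharmonically across $C$ and hence to be locally constant.

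I expect the hardest step to be the precise bookkeeping at the cusp that yields the factor $\frac{1}{6}$: one must carefully track how the local defining function $w_{0}$, its pullback by $\nu$, and the cocycle data interact with the non-smoothness of $C$ at $q_{0}$, where the usual identification $N_{C/S}|_{U}\cong\mathcal{O}_{C}(U)$ via a flat trivialization has to be replaced by an explicit comparison of germs in $\mathcal{O}_{\widetilde{C},p_{0}}$ versus $\mathcal{O}_{C,q_{0}}$. Once this cusp-local model is pinned down, the gluing and convergence arguments should proceed by direct analogues of those in \cite{U83} and \cite{K17}.
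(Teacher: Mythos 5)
Your part $(i)$ is essentially the paper's route: the paper proves exactly your linearization statement as Theorem \ref{thm:neighborhood_of_infinite_type}, by an order-by-order construction with a functional-equation majorant, the genuinely delicate point being the one you flag --- at the cusp $w_0$ is not a coordinate, so one needs bounded holomorphic extension from $U_0$ to $V_0$ with a uniform constant (Lemma \ref{lem:Linf_estim_of_ext_from_U0_to_V0}) and the special coordinates $z_j=\sqrt{x},\ w_j=w_0$ near $\partial U_0$ so that $f_{0j,\nu}\equiv 0$; combined with properness (Lemma \ref{lem:conti_map_with_cpt_inv}) and the genus-one fiber observation, this gives $(i)$.

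Part $(ii)$, however, has a genuine gap, in fact three. First, when $N_{C/S}$ is topologically but not holomorphically trivial your plan has no engine: the Ueda classes of \S\ref{section:obst} are only defined under holomorphic triviality of $N_{C/S}$, and since ${\rm Pic}^0(C)\cong\mathbb{C}$ a holomorphically non-trivial $N_{C/S}$ admits \emph{no} unitary flat structure (Lemma \ref{lem:Pic0}$(iii)$), so there is no flat cocycle against which to normalize the $W_\alpha$ and no direct analogue of Ueda's Theorems 1 and 2 to mimic; the phrase ``non-triviality of $N_{C/S}$ at order zero'' does not produce the local candidates $c_\alpha|W_\alpha|^{-\lambda\ell}$. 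The paper avoids this entirely by changing the model: blow up the cusp so that $\pi^*C=6C_1+3E_1+2E_2+E_3$, pass to a $6{:}1$ covering with $p^*D=6\widetilde{D}$, and blow down six $(-1)$-curves to obtain a smooth elliptic curve $\overline{D}\subset\overline{W}$ whose normal bundle is holomorphically trivial \emph{by construction}, irrespective of $N_{C/S}$; Ueda's Theorems 1--3 are then applied to $(\overline{D},\overline{W})$ and transported back, giving $\ell=\overline{n}/6$ with $\overline{n}$ the type of $(\overline{D},\overline{W})$ --- the $6$ is the multiplicity of $C_1$, not the conductor bookkeeping you propose. Second, your claim that $\ell$ is pinned down by cusp-local data of $(C,S)$ (vanishing orders in $\langle 2,3\rangle$ against the Ueda cocycle of $(C,S)$) implicitly asserts a formula for $\ell$ in terms of the type $n$ of $(C,S)$; the paper proves only $n\le\widetilde{n}$ and $\overline{n}\le\widetilde{n}$ and explicitly leaves the relation between $n$ and $\overline{n}$ open (Question \ref{question:comparison_ueda_types}), so this bookkeeping is not known to close. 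Third, your proof of assertion $(2)$ fails as written: $\Psi-\ve\Phi_\mu$ is a difference of plurisubharmonic functions, hence not plurisubharmonic, so no maximum principle applies, and assertion $(1)$ produces $\Phi_\lambda$ only for $\lambda>1$ while $(2)$ concerns $0<\lambda<1$; the Liouville statement genuinely needs Ueda's Theorem 2, which the paper reaches by averaging $\Psi$ over the deck transformations of $\overline{p}$ to descend it to $\overline{W}\setminus\overline{D}$ and applying Ueda there.
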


The assertion $(i)$ of Theorem \ref{thm:main} and Theorem \ref{thm:neighborhood_of_infinite_type} in the present paper, which is needed to show this theorem, can be regarded as an analogue of \cite[Theorem 3]{U83} for non-singular curves and \cite[Theorem 1.4]{K17} for nodal curves. 
The assertion $(ii)$ can be regarded as an analogue of \cite[Theorem 1, 2]{U83} at least when $N_{C/S}$ is holomorphically trivial. See \S \ref{section:examples} for further discussions on comparison with previous results. 

As an application of Theorem \ref{thm:main}, we have the following for the case where $S$ is compact K\"ahler. 
\begin{theorem}\label{thm:cpt}
Let $S$ be a connected compact K\"ahler surface and $C\subset S$ be a (reduced) rational curve with an ordinary cusp whose normal bundle is topologically trivial. 
Then either of the following $(i)$ or $(ii)$ holds: 
\begin{enumerate}
\item[$(i)$] There exists an elliptic fibration $f\colon S \to R$ onto a compact Riemann surface $R$ such that $C$ is a fiber of $f$, or\\
\item[$(ii)$] The complement $S\setminus C$ is strongly $1$-convex: i.e there exists an exhaustion function on $S\setminus C$ which is strongly plurisubharmonic outside a compact subset. 
\end{enumerate}
In particular, the complement $S\setminus C$ is holomorphically convex. 
Moreover, the following are equivalent:
\begin{enumerate}
\item[$(1)$] The assertion $(i)$ holds. \\
\item[$(2)$] The normal bundle $N_{C/S}$ is holomorphically trivial and the pair $(C, S)$ is of infinite type. \\
\item[$(3)$] The line bundle $[C]$ admits a $C^\infty$ Hermitian metric $h$ such that the Chern curvature $\sqrt{-1}\Theta_h$ is positive semi-definite at any point of $S$. 
\end{enumerate}
\end{theorem}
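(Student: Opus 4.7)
The plan is to derive Theorem~\ref{thm:cpt} from Theorem~\ref{thm:main} by globalising its local conclusions, and then to prove the three-way equivalence, reducing the subtlest direction to the growth rigidity in Theorem~\ref{thm:main}(ii)(2).

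First I would apply Theorem~\ref{thm:main} to $(C, S)$. Under its first alternative (i.e., $N_{C/S}$ holomorphically trivial and $(C, S)$ of infinite type), the local elliptic fibration $f\colon V \to \Delta$ produces a one-parameter family of compact divisors in $V$, all linearly equivalent to $C$ in $S$; this forces $h^0(S, [C]) \geq 2$, and after removing the fixed part, resolving the indeterminacies of the associated meromorphic map from $S$ to $\mathbb{P}^1$, and passing to the Stein factorisation, I would obtain a proper global elliptic fibration $S \to R$ onto a compact Riemann surface $R$ having $C$ as a fiber, which is case (i). Under the second alternative of Theorem~\ref{thm:main}, I would take $\Phi_\lambda$ from part (ii)(1) on $V \setminus C$ and patch it with a sufficiently large constant outside a smaller neighborhood $V' \Subset V$ via the regularised maximum construction: since $\Phi_\lambda \to +\infty$ along $C$, the patched function exhausts $S \setminus C$ and is strongly plurisubharmonic outside a compact subset, yielding case (ii). Holomorphic convexity then follows from properness of the fibration in case (i) and from Grauert's theorem on strongly $1$-convex manifolds in case (ii).

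For the equivalences, (2)$\Rightarrow$(1) has just been established. For (1)$\Rightarrow$(2): a local holomorphic coordinate $z$ on $R$ centered at $f(C)$ makes $f^{*}z$ a global defining function of $C$ on $f^{-1}(U)$ for a suitable neighborhood $U$ of $f(C)$, so the transition $1$-cocycle of local defining functions of $C$ is trivial, which makes $N_{C/S}$ holomorphically trivial and every Ueda class $u_n(C, S)$ zero. For (1)$\Rightarrow$(3): the isomorphism $[C] \cong f^{*}\mathcal{O}_R(f(C))$ lets us pull back a positive-curvature Hermitian metric on $\mathcal{O}_R(f(C))$, producing a smooth Hermitian metric on $[C]$ whose Chern curvature is semi-positive on $S$.

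The remaining implication (3)$\Rightarrow$(2) is where I would apply Theorem~\ref{thm:main}(ii)(2) slickly. Argue by contradiction: assume (2) fails and let $h$ be a smooth Hermitian metric on $[C]$ with $\sqrt{-1}\Theta_h \geq 0$ on $S$. Let $s \in H^0(S, [C])$ be the canonical section with $(s) = C$; then $\Psi := -\log \|s\|_h^2$ is smooth and plurisubharmonic on $S \setminus C$, and behaves like $-2\log|f_C| + O(1)$ near each point of $C$. Since $-\log|f_C| = o(|f_C|^{-\lambda \ell})$ for every $\lambda > 0$, Theorem~\ref{thm:main}(ii)(2) forces $\Psi$ to be locally constant on a punctured neighborhood of $C$, contradicting $\Psi \to +\infty$ along $C$. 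I expect the main obstacle to lie in the first step, namely the extension of the local elliptic fibration to a global one on $S$: the compact K\"ahler hypothesis has to be used essentially to ensure that the deformations of $C$ inside $V$ globalise to a proper fibration of $S$, while the rest of the argument consists of routine globalisations and the above clean application of the growth rigidity.
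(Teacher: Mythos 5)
Your outline is correct everywhere except at the step you yourself flagged as the main obstacle, and there the mechanism you propose genuinely fails. You claim that the fibers of the local elliptic fibration $f\colon V\to\Delta$ are linearly equivalent to $C$ \emph{in $S$}, forcing $h^0(S,[C])\geq 2$ and yielding a pencil. What is true is only that they are linearly equivalent in $V$: in $S$, the difference $[f^{-1}(t)]\otimes[C]^{-1}$ is merely topologically trivial, and when the irregularity $q(S)=h^1(S,\mathcal{O}_S)$ is positive it can move nontrivially in ${\rm Pic}^0(S)$. A concrete counterexample: let $g\colon S\to R$ be a relatively minimal elliptic surface (projective, hence K\"ahler) over a base $R$ of genus $\geq 1$ with a Kodaira type II fiber $C=g^{-1}(p)$; then $[C]=g^*\mathcal{O}_R(p)$, so $h^0(S,[C])=h^0(R,\mathcal{O}_R(p))=1$, the nearby fibers $g^{-1}(t)$ are \emph{not} linearly equivalent to $C$, and no meromorphic map to $\mathbb{P}^1$ arises from the linear system of $C$. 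This is also why assertion $(i)$ of Theorem \ref{thm:cpt} allows a base $R$ of arbitrary genus, which a pencil construction can never produce. The paper's route around this is essentially different: it takes a \emph{general smooth} fiber $Y$ of the local fibration, uses a cut-off of $-\log|P(x)-q|$ together with \cite[Proposition 2.1]{K22} to show that $[Y]$ is a semi-positive line bundle on all of $S$, and then invokes the global result \cite[Theorem 1.1 $(i)$]{K22} for semi-positive line bundles on compact K\"ahler surfaces to obtain $f\colon S\to R$ with $Y$ a fiber, after which $f(C)$ is a point (it is connected and misses $f(Y)$) and $C$ is a fiber. That citation is precisely where the compact K\"ahler hypothesis does its work, and it cannot be replaced by your linear-system argument.

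The remainder of your proposal agrees with the paper. The patching of $\Phi_\lambda$ with a constant to get a strongly $1$-convex exhaustion of $S\setminus C$, holomorphic convexity via properness over a Stein base in case $(i)$ and Grauert's theorem in case $(ii)$, the implication $(1)\Rightarrow(3)$ by pulling back a positively curved metric from $R$, and $(3)\Rightarrow(2)$ by contradiction—applying Theorem \ref{thm:main} $(ii)(2)$ to $\Psi=-\log\|s\|_h^2$, which grows only logarithmically yet tends to $+\infty$ along $C$—are all the paper's arguments. Two small remarks: the paper closes the equivalences by the cycle $(2)\Rightarrow(1)\Rightarrow(3)\Rightarrow(2)$, so your direct proof of $(1)\Rightarrow(2)$ is redundant; and in that direct proof you should justify that $C$ occurs as a \emph{reduced} fiber before declaring $f^*z$ a defining function (here Kodaira's classification excludes multiple fibers of type II, and in any case Lemma \ref{lem:fibr_case_normal_bdl_triv} handles triviality of $N_{C/S}$ via torsion-freeness of ${\rm Pic}^0(C)$ without this).
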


To prove Theorem \ref{thm:main}, we first show an Ueda-type linearization result (Theorem \ref{thm:neighborhood_of_infinite_type}) for neighborhoods of a rational curve with an ordinary cusp. 
Theorem \ref{thm:main}  is shown by combining this result and an argument in \cite[\S 7]{K17} on some concrete examples (the argument to reduce the problem to that on a neighborhood of a non-singular elliptic curve by changing the model by taking a blow-up, a suitable branched covering, and a blow-down). 

The organization of the paper is as follows. 
\S 2 is a preliminary section. 
In \S 3, we give the definition of obstruction classes and types for the pair of a non-singular surface and a rational curve with an ordinary cusp. 
In \S 4, we show a Ueda-type linearization result for neighborhoods of a rational curve with an ordinary cusp. 
Theorems \ref{thm:main} and \ref{thm:cpt} are shown in \S 5. 
In \S 6, we give an example and make further discussions. 

\vskip3mm
{\bf Acknowledgment. } 
The first author is supported by the Grant-in-Aid for Scientific Research C (23K03119) from JSPS. 
The second author is supported by the Grant-in-Aid for Scientific Research C (23K03148) from JSPS. 

\section{Preliminaries}\label{section:prelim}

\subsection{Open coverings and local coordinates}\label{section:notation}
Here we explain our choice of open coverings of (a neighborhood of) a rational curve with an ordinary cusp, and also fix notation concerning local coordinates that we will use throughout this paper. 

Unless otherwise specified, $C$ denotes the rational cuspidal curve $\{[x; y; z]\in \mathbb{P}^2 \mid x^3-y^2z=0\}$, which has $p_0:=[0; 0; 1]$ as an ordinary cusp. 
Let $U_0$ be the intersection of $C$ and a sufficiently small polydisc of $\mathbb{P}^2$ centered at $p_0$. 
Denote by $U_{\rm reg}$ the complement of a sufficiently small open neighborhood of $p_0$ in $U_0$.  
Taking a finite open covering $\{U_1, U_2, \dots, U_N\}$ of $U_{\rm reg}$, we use $\{U_j\}_{j=0}^N$ as an open Stein covering of $C$ in what follows. 

When $C$ is embedded in a non-singular complex surface $S$ as a reduced subvariety, we take an open neighborhood $V_0$ of $p_0$ in $S$ which can be regarded as a polydisc by suitable local coordinates $(x, y)$, and an open neighborhood $V_j$ of $U_j$ in $S\setminus \{p_0\}$ for each $j=1, 2, \dots, N$. 
We shrink the radii of the polydisc to $\ve_0^2$ and $2\ve_0^3$ for a sufficiently small positive number $\ve_0$, i.e. $V_0 = \{|x|<\ve_0^2\}\times\{|y|<2\ve_0^3\}$, and assume that  
\[
U_0 = V_0\cap C = \{(x, y)\in V_0\mid y^2 = x^3 \}
\]
holds. 
The choice of radii $\ve_0^2$ and $2\ve_0^3$ is used in the latter argument (see \S \ref{section:linearlization}). 
Denote by $\iota \colon \mathbb{P}^1 \to C$ the normalization, and put $\widetilde{U}_j := \iota^{-1}(U_j)$ for $0 \le j \le N$. 
Note that $i:=\iota|_{\widetilde{U}_0} : \Omega:= \widetilde{U}_0 \to U_0$ can be realized by letting 
\[
\Omega = \Delta_{\ve_0}:=\{\zeta \in \mathbb{C}\mid |\zeta|<\ve_0\}
\]
and $i(\zeta):=(\zeta^2, \zeta^3)\in V_0$. 
As is well-known (see e.g. \cite{HM}, 5B), 
\begin{equation}\label{eq:stalk_of_a_cusp}
\mathcal{O}_{C, p_0} \cong \left\{\left.\sum_{n=0}^\infty a_n\zeta^n\in\mathbb{C}\{\zeta\}\right| a_1=0\right\}
\end{equation}
holds via the pull-back by $i$. 

For $j=1, 2, \dots, N$, we take coordinates $(z_j, w_j)$ on a neighborhood of $\overline{V_j}$, by slightly shrinking $V_j$ if necessary, by which $V_j$ can be identified with $U_j\times \{|w_j|<\ve_1\}$ for a positive number $\ve_1$ and 
\[
U_j = V_j\cap C = \{(z_j, w_j)\in V_j\mid w_j=0 \}
\]
holds. On each $U_j$ with $j>0$, we use $z_j$ as a coordinate. 

\begin{remark}\label{rmk:refinement_needed}
By Siu's theorem \cite{S}, we can take a Stein neighborhood $V_{\rm reg}$ of $U_{\rm reg}$ which admits coordinates $(z_{\rm reg}, w_{\rm reg})$ such that 
\[
U_{\rm reg} = V_{\rm reg}\cap C = \{(z_{\rm reg}, w_{\rm reg})\in V_{\rm reg}\mid w_{\rm reg} = 0 \}
\]
holds. Apparently, this fact supports an idea that it is natural to take $N=1$, $U_1=U_{\rm reg}$, and $V_1=V_{\rm reg}$. 
However, in actual arguments (mainly in the proof of Theorem \ref{thm:main}, see \S \ref{subsection:suitable_covering}), we need to refine $U_{\rm reg}$ in order to closely compare some functions defined on $V_0$ and $V_j$ with $j>0$. 
\end{remark}

In what follows, the comparison of the defining functions $w_0$ of $U_0$ in $V_0$ ($w_0:=y^2-x^3$, for example) and $w_j$'s plays an important role for classifying the neighborhoods of $C$ in $S$. 
If $w_j=w_k$ holds on any $V_{jk}:=V_j\cap V_k$, it is clear that, by shrinking $V_j$'s if necessary, these functions glue up to define a holomorphic surjection $f\colon V:=\cup_{j=0}^N V_j \to \Delta_\ve$ onto a disc $\Delta_\ve:=\{\eta \in \mathbb{C}\mid |\eta|<\ve\}$ for a small positive number $\ve$. 
Then $f$ is proper and surjective due to the following lemma. 
\begin{lemma}\label{lem:conti_map_with_cpt_inv}
Let $Y$ be a topological space that is locally compact, namely, any point of $Y$ has an open neighborhood with compact closure, 
and $f : Y \to \mathbb{C}$ be a continuous open map such that $0 \in \mathbb{C}$ is contained in the image $f(Y)$ 
and its preimage $f^{-1}(0)$ is compact in $Y$. 
Then there exists an open neighborhood $V \subset Y$ of  $f^{-1}(0)$ and an open neighborhood $\Delta \subset \mathbb{C}$ of $0$ such that 
$f|_V : V \to \Delta$ is proper and surjective. 
\end{lemma}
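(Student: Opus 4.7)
The plan is a standard point-set topology argument built around the compactness of $f^{-1}(0)$. First, using local compactness of $Y$, I would cover $f^{-1}(0)$ by finitely many open sets with compact closure and take $W$ to be their union; then $W$ is an open neighborhood of $f^{-1}(0)$ with $\overline{W}$ compact. The topological boundary $\partial W := \overline{W}\setminus W$ is closed in the compact set $\overline{W}$, hence compact, and it is disjoint from $f^{-1}(0)$.

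Next I would separate $f^{-1}(0)$ from $\partial W$ on the target side. Since $f(\partial W)\subset\mathbb{C}$ is compact and does not contain $0$, there exists $r>0$ such that the open disc $\Delta:=\{\eta\in\mathbb{C}\mid |\eta|<r\}$ satisfies $\Delta\cap f(\partial W)=\emptyset$. Setting $V:=W\cap f^{-1}(\Delta)$ yields an open neighborhood of $f^{-1}(0)$ in $Y$ on which $f$ takes values in $\Delta$.

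For properness, given a compact $K\subset\Delta$, the set $f^{-1}(K)\cap\overline{W}$ is closed in the compact space $\overline{W}$, hence compact. By the choice of $\Delta$, any point $p\in f^{-1}(K)\cap\partial W$ would satisfy $f(p)\in K\cap f(\partial W)\subset\Delta\cap f(\partial W)=\emptyset$, so in fact $f^{-1}(K)\cap\overline{W}\subset W$, and this set coincides with $(f|_V)^{-1}(K)$; hence $f|_V$ is proper. For surjectivity, since $f$ is open the image $f(V)\subset\Delta$ is open; on the other hand, properness of $f|_V$ into the locally compact Hausdorff space $\Delta$ makes $f|_V$ a closed map, so $f(V)$ is closed in $\Delta$. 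As $0\in f(V)$ and $\Delta$ is connected, this forces $f(V)=\Delta$.

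I do not foresee any serious obstacle: the one step that genuinely uses the hypotheses in an essential way is the construction of the relatively compact $W$ via local compactness and the compactness of $f^{-1}(0)$, together with the passage from "$0\notin f(\partial W)$" to a disc $\Delta$ avoiding $f(\partial W)$, which is where compactness of $f(\partial W)$ matters. The final "open and closed" argument for surjectivity is the only place where the shape of $\Delta$ (a disc, hence connected) is used, and it is the reason we choose a disc rather than an arbitrary neighborhood of $0$.
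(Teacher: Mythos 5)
Your proof is correct. The skeleton is the same as the paper's: a relatively compact open neighborhood $W$ of $f^{-1}(0)$ (the paper simply asserts its existence from local compactness; your finite-subcover construction is exactly how one justifies it), a disc $\Delta$ avoiding the compact set $f(\partial W)\not\ni 0$, the definition $V=W\cap f^{-1}(\Delta)$, and the identical properness argument via $(f|_V)^{-1}(K)=f^{-1}(K)\cap\overline{W}$ being closed in the compact $\overline{W}$. The one genuine divergence is surjectivity. The paper builds it into the choice of $\delta$: since $f$ is open, $f(W)$ is an open set containing $0$, so one can additionally require $\Delta\subset f(W)$, after which surjectivity of $f|_V$ is immediate from the definition of $V$. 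You instead impose only the condition $\Delta\cap f(\partial W)=\emptyset$ and recover surjectivity a posteriori: $f(V)$ is open in $\Delta$ (openness of $f$), closed in $\Delta$ (a proper map into the locally compact Hausdorff space $\Delta$ is closed), nonempty, and $\Delta$ is connected. Both routes are sound; the paper's is more elementary and self-contained, while yours invokes the nontrivial but standard ``proper $\Rightarrow$ closed'' fact and the connectedness of the disc. Your version has the mild structural advantage that the two conditions on $\delta$ decouple (you never need $\Delta\subset f(W)$), and, as you note, it isolates precisely where each hypothesis — local compactness, openness, compactness of $f^{-1}(0)$, and the shape of $\Delta$ — is used; note also that neither argument needs $Y$ to be Hausdorff, since the compactness manipulations take place inside $\overline{W}$ and images land in the Hausdorff space $\mathbb{C}$.
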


\begin{proof}
From the assumption, there exists an open neighborhood $U$ of $f^{-1}(0)$ such that its closure $\overline{U}$ is compact and thus so is its boundary 
$\partial U=\overline{U} \setminus U$. 
Since $f$ is continuous, the image $f(\partial U)$ is compact in $\mathbb{C}$ with $0 \notin f(\partial U)$. 
Fix $\delta>0$ so that  $\delta< \min\{|z| \mid z\in f(\partial U)\}(>0)$ and 
\[
\Delta := \{z\in \mathbb{C} \mid |z|<\delta\} \subset f(U),
\]
and put $V:= U \cap f^{-1}(\Delta)$. 
Then $f|_V : V \to \Delta$ is surjective, and also proper since the inverse image $f|_V^{-1}(K)$ of any compact subset $K \subset \Delta$ 
is closed in $\overline{U}$ as $f|_V^{-1}(K) \cap \partial U=\emptyset$ and thus compact. 
\end{proof}

Note that the following holds for such a case. 
\begin{lemma}\label{lem:fibr_case_normal_bdl_triv}
Let $C$ and $S$ be as above. Assume that there exists a proper holomorphic surjection $f\colon V \to \Delta_\ve$ onto a disc $\Delta_\ve$ for a small positive number $\ve$, and that $C$ is a (set-theoretical) fiber of $f$. 
Then the normal bundle $N_{C/S}$ is holomorphically trivial. 
\end{lemma}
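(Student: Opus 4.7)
The plan is to use $f$ to trivialize a power of the line bundle $[C]|_V$, and then to upgrade triviality of $N_{C/S}^{\otimes m}$ to triviality of $N_{C/S}$ via the torsion-freeness of $\mathrm{Pic}^0(C)\cong\mathbb{C}$.

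First I would identify the Cartier divisor cut out by $f$ on $V$. Since $f$ is a nonzero holomorphic function whose zero set equals the irreducible curve $C$, and since $C\subset V$ is a Cartier divisor in the smooth surface $V$ (at every point $p\in V$ it admits a local defining function $g$: one can take $g=w_j$ at a smooth point of $C$, and $g=y^2-x^3$ at the cusp $p_0$), unique factorization in $\mathcal{O}_{V,p}$ yields a local presentation $f=u\cdot g^m$ with $u$ a unit, and the exponent $m\geq 1$ is locally constant, hence constant, along the connected curve $C$. Thus $(f)=mC$ as Cartier divisors on $V$. Since the principal divisor of a nonzero holomorphic function gives a trivial line bundle, this yields $[C]^{\otimes m}|_V=\mathcal{O}_V(mC)\cong\mathcal{O}_V$, and pulling back by $j\colon C\to V$ shows that $N_{C/S}^{\otimes m}=j^*[C]^{\otimes m}$ is holomorphically trivial on $C$.

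Finally I would invoke Lemma \ref{lem:Pic0}, which identifies $\mathrm{Pic}^0(C)$ with the additive group $\mathbb{C}$. Triviality of $N_{C/S}^{\otimes m}$ forces $\deg N_{C/S}=0$, so $N_{C/S}\in\mathrm{Pic}^0(C)$; since $\mathbb{C}$ is torsion-free, $m\cdot[N_{C/S}]=0$ forces $[N_{C/S}]=0$, i.e., $N_{C/S}$ is holomorphically trivial. The main technical point is the divisor computation at the cusp, but this reduces to the standard facts that $\mathcal{O}_{V_0,p_0}\cong\mathbb{C}\{x,y\}$ is a UFD and that $y^2-x^3$ is irreducible in it, so no genuine difficulty arises and the rest of the argument is formal.
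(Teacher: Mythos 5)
Your proposal is correct and follows essentially the same route as the paper, whose entire proof is the citation of Lemma \ref{lem:Pic0} $(ii)$: the divisor computation $\mathrm{div}(f)=mC$ (trivializing $N_{C/S}^{\otimes m}$) and the torsion-freeness of $\mathrm{Pic}^0(C)\cong\mathbb{C}$ are exactly the steps the paper leaves implicit. Your UFD argument at the cusp just spells out the routine part; no discrepancy.
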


\begin{proof}
This follows from Lemma \ref{lem:Pic0} $(ii)$. 
\end{proof}

The notations given in this subsection are used in the following (sub)sections. 

\subsection{Topologically trivial holomorphic line bundles on $C$}

In this subsection, we will prepare the basic facts related to topologically trivial holomorphic line bundles on $C$. 

For $\widetilde{U}_j := \iota^{-1}(U_j)$ with the normalization $\iota\colon \mathbb{P}^1 \to C$, one has 
an isomorphism 
\[
\iota|_{\widetilde{U}_j} : \widetilde{U}_j \to U_j
\]
for $1 \le j \le N$. 
On the other hand, if $j=0$, then isomorphism (\ref{eq:stalk_of_a_cusp}) 
means that $\tilde{\alpha} \in \mathcal{O}_{\mathbb{P}^1}(\Omega)$ is realized as $\tilde{\alpha}=i^* \alpha$ for some $\alpha \in \mathcal{O}_C(U_0)$ 
if and only if $d \tilde{\alpha}/d \zeta(0)=0$. 
Any $1$-cocycle
\[
\{(U_{jk}, \beta_{jk})\} \in \check{Z}^1(\{U_j\}_{j=0}^N, \mathcal{O}_C)
\]
admits a $0$-cochain
\[
\{(U_j, \widetilde{\alpha}_j)\} \in \check{C}^0(\{\widetilde{U}_j\}_{j=0}^N, \mathcal{O}_{\mathbb{P}^1}) \quad \text{with}  \quad 
\delta\{(\widetilde{U}_j, \widetilde{\alpha}_j)\} = \iota^*\{(U_{jk}, \beta_{jk})\}, 
\]
which is unique up to an additive constant since 
\[
\check{H}^q(\{\widetilde{U}_j\}_{j=0}^N, \mathcal{O}_{\mathbb{P}^1}) = H^q(\mathbb{P}^1, \mathcal{O}_{\mathbb{P}^1}) = 
\left\{ \begin{array}{rl}
\mathbb{C} & (q=0) \\
0 & (q=1)
\end{array} \right. 
\]
for a Stein open covering $\{\widetilde{U}_j\}_{j=0}^N$ of $\mathbb{P}^1$, and thus induces
\[
S(\{(U_{jk}, \beta_{jk})\}) := \frac{d \widetilde{\alpha}_0}{d\zeta}(0)
\]
on $\widetilde{U}_0=\Omega$, which defines a linear map $S\colon \check{Z}^1(\{U_j\}_{j=0}^N, \mathcal{O}_C)\to \mathbb{C}$. 
Then, we have 
\[
{\rm Ker}\,S = \check{B}^1(\{U_j\}_{j=0}^N, \mathcal{O}_C). 
\]
In particular, it follows that $H^1(C, \mathcal{O}_C)\cong\mathbb{C}$. 
Moreover by applying the result in \cite[Lemma 2]{KS} (=\cite[Lemma 3]{U83}), 
we have the following lemma. 

\begin{lemma}\label{lem:H^1ofC}
There exists a positive constant $K_0>0$ such that any $1$-coboundary
\[
\{(U_{jk}, \beta_{jk})\} \in \check{B}^1(\{U_j\}_{j=0}^N, \mathcal{O}_C)
\]
admits a $0$-cochain
\[
\{(U_j, \alpha_j)\} \in \check{C}^0(\{U_j\}_{j=0}^N, \mathcal{O}_{\mathbb{P}^1}) \quad \text{with} \quad 
\delta\{(U_j, \alpha_j)\} = \{(U_{jk}, \beta_{jk})\}
\]
that satisfies 
\[
\max_{0\leq j\leq N}\sup_{U_j}|\alpha_j| \leq K_0\cdot \max_{0\leq j, k\leq N}\sup_{U_j\cap U_k} |\beta_{jk}|. 
\]
\end{lemma}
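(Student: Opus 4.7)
The plan is to reduce the problem to the analogous statement on the normalization $\mathbb{P}^1$, where the classical Kodaira--Spencer estimate \cite[Lemma 2]{KS} applies directly. Given a $1$-coboundary $\{(U_{jk},\beta_{jk})\}\in \check{B}^1(\{U_j\}_{j=0}^N,\mathcal{O}_C)$, I would first pull it back via the normalization $\iota\colon \mathbb{P}^1\to C$, setting $\widetilde{\beta}_{jk}:=\iota^*\beta_{jk}$. Since $\iota$ is a homeomorphism onto $C$, we have $\sup_{\widetilde{U}_{jk}}|\widetilde{\beta}_{jk}|=\sup_{U_{jk}}|\beta_{jk}|$, and $\{\widetilde{\beta}_{jk}\}$ is a $1$-coboundary (and in particular a cocycle) on $\mathbb{P}^1$ with respect to the finite Stein covering $\{\widetilde{U}_j\}_{j=0}^N$. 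The Kodaira--Spencer lemma then supplies a constant $K_0>0$, depending only on the covering, and a $0$-cochain $\{(\widetilde{U}_j,\widetilde{\alpha}_j)\}\in \check{C}^0(\{\widetilde{U}_j\}_{j=0}^N,\mathcal{O}_{\mathbb{P}^1})$ satisfying $\delta\{\widetilde{\alpha}_j\}=\{\widetilde{\beta}_{jk}\}$ together with the sup-norm bound $\max_j\sup_{\widetilde{U}_j}|\widetilde{\alpha}_j|\leq K_0\max_{j,k}\sup_{\widetilde{U}_{jk}}|\widetilde{\beta}_{jk}|$.

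The substantive step is then to descend $\{\widetilde{\alpha}_j\}$ back to a $0$-cochain $\{\alpha_j\}$ on $C$. For each $j\geq 1$ the restriction $\iota|_{\widetilde{U}_j}\colon \widetilde{U}_j\to U_j$ is an isomorphism, so $\widetilde{\alpha}_j$ automatically descends to some $\alpha_j\in \mathcal{O}_C(U_j)$ with identical sup-norm. For $j=0$, by the isomorphism (\ref{eq:stalk_of_a_cusp}), $\widetilde{\alpha}_0$ descends to an element of $\mathcal{O}_C(U_0)$ precisely when $d\widetilde{\alpha}_0/d\zeta(0)=0$. But by the definition of the $S$-map (together with its well-definedness on $\check{Z}^1$) this value coincides with $S(\{\beta_{jk}\})$, which vanishes by the hypothesis that $\{\beta_{jk}\}$ is a coboundary on $C$, since ${\rm Ker}\, S = \check{B}^1(\{U_j\}_{j=0}^N,\mathcal{O}_C)$ as already observed. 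Hence $\widetilde{\alpha}_0$ descends to some $\alpha_0\in \mathcal{O}_C(U_0)$, and the injectivity of $\iota^*$ on cochains yields $\delta\{\alpha_j\}=\{\beta_{jk}\}$ on $C$. The norm bound is inherited unchanged from the $\mathbb{P}^1$ estimate, so the same constant $K_0$ works.

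The main point to recognize is conceptual rather than technical: the cusp-singularity descent obstruction at $p_0$ is precisely the $S$-invariant, so the hypothesis that $\{\beta_{jk}\}$ lies in $\check{B}^1(\{U_j\}_{j=0}^N,\mathcal{O}_C)$ (rather than merely in $\check{Z}^1$) is exactly what is needed to make the Kodaira--Spencer solution on $\mathbb{P}^1$ drop back to $C$. Apart from this identification, the proof is a routine pullback--solve--descend argument relying only on the classical estimate on the normalization, so I do not anticipate any further technical obstacle.
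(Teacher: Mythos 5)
Your proposal is correct and follows essentially the same route as the paper: the paper gives no separate proof but derives the lemma by citing \cite[Lemma 2]{KS} (= \cite[Lemma 3]{U83}) on the normalization, relying on exactly the machinery set up in the preceding paragraph — the pullback to $\mathbb{P}^1$, the $0$-cochain unique up to an additive constant, and the identity ${\rm Ker}\,S=\check{B}^1(\{U_j\}_{j=0}^N,\mathcal{O}_C)$ guaranteeing the descent condition $d\widetilde{\alpha}_0/d\zeta(0)=0$ at the cusp. Your writeup simply makes explicit the pullback--solve--descend argument the authors intend, including the key observation that membership in $\check{B}^1$ (rather than $\check{Z}^1$) is precisely what kills the descent obstruction at $p_0$.
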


Note that  the exponential sheaf homomorphism $\exp : \mathcal{O}_C \to \mathcal{O}_C^*$ gives the (exponential) 
exact sequence $0\to 2\pi\sqrt{-1}\mathbb{Z}\to \mathcal{O}_C\to \mathcal{O}_C^*\to 0$, which induces the long exact sequence
\[
H^1(C,2\pi\sqrt{-1}\mathbb{Z}) \longrightarrow H^1(C,\mathcal{O}_C) \longrightarrow  H^1(C,\mathcal{O}_C^*) \overset{-2 \pi\sqrt{-1}c_1}{\longrightarrow } H^2(C,2\pi\sqrt{-1}\mathbb{Z}),
\]
where $\mathcal{O}_C^*$ is the sheaf of non-vanishing holomorphic functions on $C$, and $c_1$ stands for the first Chern class. 
Then ${\rm Pic}^0(C):= {\rm Ker} (c_1)$ is the subgroup of ${\rm Pic}(C):=H^1(C,\mathcal{O}_C^*)$ consisting of (isomorphism
classes of) topologically trivial line bundles. 
Moreover, the inclusion map ${\rm U}(1) \to \mathcal{O}_C^*$ induces a homomorphism $H^1(C, {\rm U}(1))\to {\rm Pic}(C)=H^1(C, \mathcal{O}_C^*)$, 
each element $L \in {\rm Pic}(C)$ of whose image is said to have a {\it unitary flat structure}. 

\begin{lemma}\label{lem:Pic0}
As to the group ${\rm Pic}^0(C)$, the following assertions hold. 
\begin{enumerate}
\item[$(i)$] ${\rm Pic}^0(C)\cong \mathbb{C}$. \\
\item[$(ii)$] If $L\in {\rm Pic}^0(C)$ satisfies $L^n:=L^{\otimes n}=1$ (i.e. $L^n$ is holomorphically trivial) for some $n \ge 1$, 
then $L$ is itself holomorphically trivial. \\
\item[$(iii)$] If $L\in {\rm Pic}^0(C)$ has a unitary flat structure, then $L$ is holomorphically trivial. 
\end{enumerate}
\end{lemma}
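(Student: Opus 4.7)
The key structural input is that the normalization $\iota\colon \mathbb{P}^1 \to C$ is a continuous bijection between compact Hausdorff spaces, hence a homeomorphism. In particular $C$ is topologically a $2$-sphere, so $H^1(C, \mathbb{Z}) = 0$, $H^2(C, \mathbb{Z}) \cong \mathbb{Z}$, and $\pi_1(C) = 0$. Combined with the identification $H^1(C, \mathcal{O}_C) \cong \mathbb{C}$ established just above the lemma (via the linear map $S$), these are the only inputs I will need.

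For (i), I feed the topological vanishing into the long exact sequence of the exponential sheaf sequence already displayed above the lemma. The vanishing $H^1(C, \mathbb{Z}) = 0$ makes the map $H^1(C, \mathcal{O}_C) \to {\rm Pic}(C)$ injective with image $\ker c_1 = {\rm Pic}^0(C)$, so ${\rm Pic}^0(C) \cong H^1(C, \mathcal{O}_C) \cong \mathbb{C}$. Part (ii) is then immediate: under this isomorphism ${\rm Pic}^0(C)$ becomes the additive group $\mathbb{C}$, which is torsion-free, so $L^{\otimes n} \cong \mathcal{O}_C$ forces $L \cong \mathcal{O}_C$.

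For (iii), the cleanest route is to observe that $H^1(C, {\rm U}(1)) = {\rm Hom}(\pi_1(C), {\rm U}(1)) = 0$ since $C$ is simply connected, and by definition the image of $H^1(C, {\rm U}(1)) \to {\rm Pic}(C)$ is precisely the set of classes carrying a unitary flat structure. A \v{C}ech-level variant is equivalent: any cocycle $\{t_{jk}\}$ with values in ${\rm U}(1)$ pulls back to a unitary flat cocycle on $\mathbb{P}^1$, which is a coboundary of constants $\tilde{c}_j \in {\rm U}(1)$ because ${\rm Pic}^0(\mathbb{P}^1) = 0$ and each $\widetilde{U}_j$ is connected; these constants on $\widetilde{U}_0$ trivially satisfy the derivative condition $d\tilde{c}_0/d\zeta(0) = 0$ at $\zeta = 0$, and hence descend through (\ref{eq:stalk_of_a_cusp}) to constants $c_j$ on $U_j$ providing a holomorphic trivialization of $L$ on $C$.

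I do not foresee a genuine obstacle: each assertion reduces to the plain topology of $S^2$ together with the already-established algebraic identification $H^1(C, \mathcal{O}_C) \cong \mathbb{C}$. The one bookkeeping point is that the trivialization of $\iota^* L$ in part (iii) descends through the cusp, which is automatic because constant sections always lie in the image of $i^*\colon \mathcal{O}_{C, p_0} \to \mathcal{O}_{\mathbb{P}^1, 0}$.
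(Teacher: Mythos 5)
Your proof is correct and takes essentially the same route as the paper: parts $(i)$ and $(ii)$ are exactly the paper's argument (the exponential sequence with $H^1(C,\mathbb{Z})=0$ and $H^1(C,\mathcal{O}_C)\cong\mathbb{C}$, then torsion-freeness of $\mathbb{C}$), and for $(iii)$ your use of $\pi_1(C)=0$ for the topological sphere $C$ yields the same vanishing $H^1(C,\mathrm{U}(1))=0$ that the paper obtains via the universal coefficient theorem from $H_1(C,\mathbb{Z})=0$. Your explicit \v{C}ech-level descent through the cusp is a harmless supplementary verification rather than a different method.
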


\begin{proof}
$(i)$ follows from the above long exact sequence and the facts that $H^1(C,2\pi\sqrt{-1}\mathbb{Z})=0$ and $H^1(C,\mathcal{O}_C) \cong \mathbb{C}$. 
$(ii)$ follows since ${\rm Pic}^0(C)\cong \mathbb{C}$ is torsion-free. 
$(iii)$ follows from
\[
H^1(C, {\rm U}(1))\cong {\rm Hom}_{\mathbb{Z}}(H_1(C, \mathbb{Z}),\ {\rm U}(1))
\]
by the universal coefficient theorem and the fact that $H_1(C, \mathbb{Z})=0$. 
\end{proof}

\subsection{Extension of holomorphic functions from $U_0$ to $V_0$}

In this subsection, we will prove the following lemma. 

\begin{lemma}\label{lem:Linf_estim_of_ext_from_U0_to_V0}
For any holomorphic function $f\in\mathcal{O}_C(U_0)\cap L^\infty(U_0)$ on $U_0$, there exists a holomorphic function $F\in \mathcal{O}_S(V_0)\cap L^\infty(V_0)$ on $V_0$ 
such that $F|_{U_0}=f$ and 
\begin{equation}\label{ineq:estim_of_ext_F}
\sup\{|F(x, y)| \mid (x, y)\in V_0\} \leq 3\cdot \sup\{|f(p)|\mid p\in U_0\}. 
\end{equation}
\end{lemma}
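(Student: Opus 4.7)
The plan is to pull $f$ back along the normalization $i\colon \Omega=\Delta_{\ve_0}\to U_0$, $i(\zeta)=(\zeta^2,\zeta^3)$. By the isomorphism (\ref{eq:stalk_of_a_cusp}), the pullback $\widetilde{f}:=i^*f$ is a holomorphic function on $\Omega$ of the form $\widetilde{f}(\zeta)=\sum_{n\neq 1}a_n\zeta^n$, with $\sup_\Omega|\widetilde{f}|=M:=\sup_{U_0}|f|$. I would then construct an extension of the concrete form $F(x,y)=g(x)+y\,h(x)$, determining $g$ and $h$ from $\widetilde{f}$ by an even/odd decomposition in $\zeta$, and reading off their $L^\infty$-norms from the properties of $\widetilde{f}$ itself.

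More precisely, split $\widetilde{f}=\widetilde{f}_e+\widetilde{f}_o$ with $\widetilde{f}_e(\zeta):=(\widetilde{f}(\zeta)+\widetilde{f}(-\zeta))/2$ and $\widetilde{f}_o(\zeta):=(\widetilde{f}(\zeta)-\widetilde{f}(-\zeta))/2$; both are bounded by $M$ on $\Omega$. Since $\widetilde{f}_e$ is even in $\zeta$, it descends to a holomorphic function $g$ of $x=\zeta^2$ on $\{|x|<\ve_0^2\}$ with $|g(x)|\leq M$. Since $a_1=0$, the odd part $\widetilde{f}_o$ is divisible by $\zeta^3$, and $\widetilde{f}_o(\zeta)/\zeta^3$ is even in $\zeta$, so it descends to a holomorphic function $h(x)$ on $\{|x|<\ve_0^2\}$ satisfying the key identity $\zeta^3h(\zeta^2)=\widetilde{f}_o(\zeta)$.

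The crux is the $L^\infty$-bound on $h$. The identity above gives the pointwise bound $|h(x)|\leq M/|x|^{3/2}$ on $0<|x|<\ve_0^2$, which blows up as $x\to 0$; however, $h$ is holomorphic at $x=0$. Applying the maximum modulus principle to $h$ on $\{|x|\leq r\}$ together with the boundary bound $M/r^{3/2}$, and then letting $r\uparrow\ve_0^2$, one obtains the uniform estimate $|h(x)|\leq M/\ve_0^3$ on the whole disc $\{|x|<\ve_0^2\}$.

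Defining $F(x,y):=g(x)+y\,h(x)$, one checks $F(\zeta^2,\zeta^3)=\widetilde{f}_e(\zeta)+\widetilde{f}_o(\zeta)=\widetilde{f}(\zeta)$, so $F|_{U_0}=f$; and on $V_0=\{|x|<\ve_0^2\}\times\{|y|<2\ve_0^3\}$,
\[
|F(x,y)|\leq|g(x)|+|y|\cdot|h(x)|\leq M+2\ve_0^3\cdot M/\ve_0^3=3M,
\]
matching the asserted constant and making the role of the specific radii $\ve_0^2$ and $2\ve_0^3$ transparent. The main obstacle I foresee is resisting the tempting route via Cauchy's inequalities on the coefficients $a_n$: those give only $|a_n|\leq M\ve_0^{-n}$, which upon summation yield merely locally uniform (not uniformly bounded) estimates for $g$ and $h$ on $\{|x|<\ve_0^2\}$. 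The parity trick is essential here, as it replaces coefficient-wise estimates by pointwise ones applied to $\widetilde{f}(\pm\zeta)$, which are immediate.
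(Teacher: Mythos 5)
Your proof is correct and is essentially the same as the paper's: the paper also decomposes $i^*f$ into even and odd parts (its $g$ and $\zeta^3 h$), uses $a_1=0$ for divisibility by $\zeta^3$, applies the maximum principle on circles to obtain $|h|\leq M/\ve_0^3$, and sets $F(x,y)=g+y\,h$ to get the bound $M+2\ve_0^3\cdot M/\ve_0^3=3M$. The only difference is notational --- you work with the descended functions of $x=\zeta^2$ while the paper keeps $g,h$ as even functions of $\zeta$ and writes $F(x,y)=g(\sqrt{x})+y\,h(\sqrt{x})$.
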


\begin{proof} For any holomorphic function $f$ on $U_0$, isomorphism (\ref{eq:stalk_of_a_cusp}) shows that 
the Taylor expansion of $i^*f:=f\circ i\colon \Omega\to \mathbb{C}$ at $0$ is given by 
\[
i^*f(\zeta) = a_0 + \sum_{n=2}^\infty a_n\zeta^n
\]
with $a_0, a_2, a_3, \dots \in\mathbb{C}$. Then we define holomorphic functions $g, h$ on $\Omega$ by 
\[
g(\zeta) := \frac{i^*f(\zeta) + i^*f(-\zeta)}{2}, \quad h(\zeta) := \frac{i^*f(\zeta) - i^*f(-\zeta)}{2 \zeta^3},
\]
which are even functions and satisfy $i^*f=g+ \zeta^3 \cdot h$. 
Thus a function $F\colon V_0\to \mathbb{C}$ given by
\[
F(x, y) := g(\sqrt{x}) + y\cdot h(\sqrt{x})=a_0 + \sum_{m=1}^\infty a_{2m}x^m + \sum_{m=1}^\infty a_{2m+1}x^{m-1}y
\]
is well-defined and holomorphic on $V_0$. 
Since $i^*x = \zeta^2, i^*y = \zeta^3$, one has $i^*F=i^*f$, and $F|_{U_0}=f$. 

Now we will give an estimate of the $L^\infty$-norm of $F$. Put $M:=\sup\{|f(p)|\mid p\in U_0\}$. 
From the definition of $g,h$, one has 
\[ 
\begin{array}{l}
|g(\zeta)|\leq \frac{1}{2}\left(|i^*f(\zeta)| + |i^*f(-\zeta)| \right) \leq M, \\[2mm]
|\zeta^3 \cdot h(\zeta) |\leq \frac{1}{2}\left(|i^*f(\zeta)| + |-i^*f(-\zeta)| \right) \leq M
\end{array}
\]
for each $\zeta\in \Omega$. 
The maximum principle says that 
\[
\sup\{|h(\zeta)| \mid |\zeta|<r\}
=\max\left\{\left.\frac{|\zeta^3 \cdot h(\zeta)|}{|\zeta|^3}\right| |\zeta|=r\right\}
\leq \frac{M}{r^3}
\]
for any $r\in (0, \ve_0)$. 
By letting $r\nearrow \ve_0$, we have 
\[
\sup\{|h(\zeta)| \mid \zeta\in \Omega\} \leq \frac{M}{\ve_0^3}, 
\]
and thus
\[
|F(x, y)| \leq |g(\sqrt{x})| + |y|\cdot |h(\sqrt{x})|
\leq M + 2\ve_0^3\cdot \frac{M}{\ve_0^3}
= 3M
\]
for any $(x, y) \in V_0 = \{|x|<\ve_0^2\}\times\{|y|<2\ve_0^3\}$. 
The lemma is established. 
\end{proof}
\section{Obstruction classes}\label{section:obst}

In this section, we use notations given in \S \ref{section:notation}. 
Namely, 
\begin{itemize}
\item the rational curve $C$ with an ordinary cusp is embedded in a non-singular complex surface $S$ as a reduced subvariety, 
\item $\{U_k\}_{k=0}^N$ is an open Stein covering of $C$ with $p_0 \in U_0$ and $p_0 \notin U_k$ for $1 \le k \le N$, 
where $p_0$ is the cusp of $C$, 
\item $w_0$ is a defining function of $U_0$ in an open Stein neighborhood $V_0$, and 
\item for $1 \le k \le N$, an open Stein neighborhood $V_k$ of $U_k$ in $S$ is chosen so that $p_0 \notin V_k$ and 
\[
U_k=V_k \cap C=\{(z_k,w_k) \in V_k \mid w_k=0\}, 
\]
where $(z_k,w_k)$ are local coordinates on $V_k$. 
\end{itemize}
Moreover, we assume that $N_{C/S} \in H^1(C, \mathcal{O}_C^*)$ is holomorphically trivial, 
and give a similar discussion given in \cite[\S 2]{U83}. 
As $N_{C/S}=[\{ (U_{jk},dw_j/dw_k|_{U_{jk}}) \}]=1$ with $U_{jk}:=U_j \cap U_k$, one may assume that $dw_j=dw_k$ on $U_{jk}$
by multiplying each defining function $w_k$ by a non-where vanishing holomorphic function. 
Then for $j>0$ and $k \geq 0$, the defining function $w_k|_{V_{jk}}$ on $V_{jk}$ can be expressed in terms of the coordinates $(z_j,w_j)$ as 
\[
w_k = w_j + \sum_{\nu=2}^\infty f_{kj, \nu}(z_j)\cdot w_j^\nu, 
\]
where $f_{kj, \nu}$ is a holomorphic function on $U_{jk}$. Moreover we put $f_{j0, \nu}:=-f_{0j, \nu}$ for $j>0$. 

Let $n \geq 1$ be a positive integer. A system $\{(V_j, w_j)\}_j$ is said to be {\it of type $n$} if $f_{kj, \nu}\equiv 0$ for any $2\leq \nu\leq n$. 
Note that any system is of type $1$. 
Then the {\it $n$-th obstruction class} (or {\it Ueda class}) $u_n(C, S)\in H^1(C, \mathcal{O}_C)$ (see \cite[\S 2]{U83}, \cite[\S 1]{N}) is given by
\[
u_n(C, S) := [\{(U_{jk}, f_{kj, n+1})\}] \in \check{H}^1(\{U_j\}_{j=0}^N, \mathcal{O}_C),
\]
which is well-defined due to the following lemmas. 

\begin{lemma}\label{lem:1-cocycle}
Any system $\{(U_{jk}, f_{kj, n+1})\} \in \check{C}^1(\{U_j\}_{j=0}^N, \mathcal{O}_C)$ satisfies 
$\delta \{(U_{jk}, f_{kj, n+1})\}=0$. 
\end{lemma}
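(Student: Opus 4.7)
The plan is to derive, on every triple intersection $U_{jkl}:=U_j\cap U_k\cap U_l$, the identity
\[
f_{lj,n+1}\big|_{U_{jkl}}=f_{kj,n+1}\big|_{U_{jkl}}+f_{lk,n+1}\big|_{U_{jkl}},
\]
which is exactly the Čech cocycle condition $\delta\{(U_{jk},f_{kj,n+1})\}=0$ (up to the antisymmetry $f_{jk,n+1}=-f_{kj,n+1}$ on $U_{jk}$, which itself falls out of the same kind of substitution applied to the pair $(j,k)$).

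First I would record that, since the system $\{(V_j,w_j)\}$ is of type $n$, on each double overlap $V_{jk}$ the transition has the form
\[
w_k=w_j+f_{kj,n+1}(z_j)\,w_j^{n+1}+\sum_{\nu\geq n+2}f_{kj,\nu}(z_j)\,w_j^{\nu}.
\]
Next, on $V_{jkl}$, I would substitute this expression for $w_k$ into the analogous relation $w_l=w_k+f_{lk,n+1}(z_k)\,w_k^{n+1}+O(w_k^{n+2})$ and expand the right-hand side as a convergent power series in $w_j$. Because $w_k=w_j+O(w_j^{n+1})$, one has $w_k^{n+1}=w_j^{n+1}+O(w_j^{n+2})$, and the coefficient of $w_j^{n+1}$ in the expansion is therefore $f_{kj,n+1}+f_{lk,n+1}|_{w_j=0}$. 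Comparing with the direct expansion $w_l=w_j+f_{lj,n+1}(z_j)\,w_j^{n+1}+O(w_j^{n+2})$ and restricting to the curve (i.e.\ setting $w_j=0$, which amounts to restricting to $U_{jkl}$) produces precisely the desired equality.

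The only point that requires a word of care is the behavior near the cusp $p_0$. By construction $p_0\notin U_k$ for every $k\geq 1$, so any triple intersection $U_{jkl}$ with pairwise distinct indices automatically lies in the smooth locus of $C$; on such a set $z_k$ is a genuine local coordinate and the formal substitution above is an honest identity of convergent power series in $(z_j,w_j)$. Triples in which two indices coincide reduce to the antisymmetry relation. I therefore expect no genuine obstacle in the argument; the main step is the $w_j$-expansion just described, and the whole proof proceeds in parallel with the non-singular case treated in \cite[\S 2]{U83}.
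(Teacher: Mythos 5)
Your proof is correct, but it takes a genuinely different route from the paper's. You verify the cocycle identity directly on triple overlaps: substituting $w_k=w_j+f_{kj,n+1}(z_j)\,w_j^{n+1}+O(w_j^{n+2})$ into $w_l=w_k+f_{lk,n+1}(z_k)\,w_k^{n+1}+O(w_k^{n+2})$ and comparing coefficients of $w_j^{n+1}$ gives $f_{lj,n+1}=f_{kj,n+1}+f_{lk,n+1}$ on $U_{jkl}$. Your care about the cusp is exactly the right point: the expansions are only defined in the charts $(z_j,w_j)$ with $j\geq 1$ (and $f_{j0,\nu}:=-f_{0j,\nu}$ is set by convention, since $w_0$ is not a coordinate), so in any triple containing the index $0$ one must always take the base chart to have index $\geq 1$ and let $w_0$ be the expanded function — which your substitution scheme does, and the needed antisymmetry for $j,k\geq 1$ follows from the same substitution applied to the pair, as you say. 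The paper instead proves the lemma by a closed formula: inverting the type-$n$ relation gives $w_k^{-n}=w_j^{-n}-n\,f_{kj,n+1}(z_j)+O(w_j)$, hence
\[
f_{kj,n+1}=\frac{1}{n}\left.\left(\frac{1}{w_j^{n}}-\frac{1}{w_k^{n}}\right)\right|_{U_{jk}},
\]
which exhibits the cochain as a difference of the local meromorphic data $\bigl\{n^{-1}w_j^{-n}\bigr\}$, so that the cocycle condition (and antisymmetry) follow by pure telescoping, with no bookkeeping about which chart to expand in and no case analysis at the index $0$. Your argument is more elementary and stays closer to the definition; the paper's formula is slicker, treats all index patterns uniformly, and encodes the standard Ueda-theoretic interpretation of $u_n(C,S)$ as the obstruction to matching the principal parts $w_j^{-n}$, which is why it is the usual choice in this literature.
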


\begin{proof}
Let $\{(U_{jk}, f_{kj, n+1})\}$ be a system of type $n$. Then, it follows that 
\[
w_k = w_j + f_{kj, n+1}(z_j)\cdot w_j^{n+1} + O(w_j^{n+2})
\]
on $V_{jk}$ for $j>0$ and $k \geq 0$, and thus 
\[
\frac{1}{w_k^n} = \frac{1}{w_j^n}\cdot \left(1 + f_{kj, n+1}(z_j)\cdot w_j^n + O(w_j^{n+1})\right)^{-n}
= \frac{1}{w_j^n} - n \cdot f_{kj, n+1}(z_j)+ O(w_j).
\]
Therefore we have
\[
f_{kj, n+1} = \frac{1}{n}\left.\left(\frac{1}{w_j^n}-\frac{1}{w_k^n}\right)\right|_{U_{jk}}, 
\]
which proves the lemma. 
\end{proof}

\begin{lemma}\label{lem:well_def_obstr_class}
Let $n \geq 1$ be a positive integer. 
Then, the choice of a system $\{(V_j, w_j)\}$ of type $n$ does not affect whether 
the $n$-th obstruction class $u_n(C, S)$ is $0$ or not. 
\end{lemma}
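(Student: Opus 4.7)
The plan is to prove the stronger statement that any two systems of type $n$ yield cocycles in the same cohomology class of $H^1(C, \mathcal{O}_C)$, which immediately implies the lemma. Given two such systems $\{(V_j, w_j)\}$ and $\{(V_j, w'_j)\}$ (passing to a common refinement if needed), I would write $w'_j = u_j w_j$ for a unique $u_j \in \mathcal{O}^*(V_j)$, and use the compatibilities $dw_j = dw_k$, $dw'_j = dw'_k$ on each $U_{jk}$ to show that $\{u_j|_{U_j}\}$ glues into a global section of $\mathcal{O}_C^*$, necessarily a nonzero constant; after rescaling to $1$ one obtains the formal expansion $w'_j = w_j + \sum_{\nu \ge 2} \alpha_{j,\nu}(z_j)\, w_j^{\nu}$ with $\alpha_{j,\nu} \in \mathcal{O}_C(U_j)$, where on $V_0$ the coefficients are extracted via the principal-ideal structure $\mathcal{I}_{U_0} = (w_0)$.

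The core of the argument is a jet-level computation. For a ``simple'' modification $w'_j = w_j + g_j(z_j)\, w_j^m + O(w_j^{m+1})$ with $2 \le m \le n+1$, the type-$n$ relation $w_k = w_j + f_{kj, n+1}(z_j) w_j^{n+1} + O(w_j^{n+2})$ (which forces $w_k^\nu \equiv w_j^\nu$ modulo suitably high powers of $w_j$), together with the elementary expansion $g_k(z_k(z_j, w_j)) = g_k(z_k(z_j, 0)) + O(w_j)$, yields after direct substitution
\[
f'_{kj, m}(z_j) - f_{kj, m}(z_j) = g_k(z_k(z_j, 0)) - g_j(z_j) \quad \text{on } U_{jk}.
\]
This is exactly the \v{C}ech coboundary $\delta\{(U_j, g_j)\}$.

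To reduce a general modification to this simple form, I would argue by induction on $\mu = 2, \dots, n$ that each $\alpha_{j,\mu}$ glues to a global constant $c_\mu \in H^0(C, \mathcal{O}_C) \cong \mathbb{C}$. In the inductive step, after absorbing the previously identified constants via a universal substitution $\hat{w}_j := w_j + \sum_{\nu=2}^{\mu-1} c_\nu w_j^\nu$ (which, being the same polynomial on every patch, defines a new system of type $n$ with identical cocycles at all levels $\le n+1$), one reaches $w'_j - \hat{w}_j = \alpha_{j,\mu}(z_j)\, \hat{w}_j^\mu + O(\hat{w}_j^{\mu+1})$; the jet identity applied at level $\mu$ together with the fact that both $\{w_j\}$ and $\{w'_j\}$ are of type $\ge \mu$ then forces $\{\alpha_{j,\mu}\}$ to glue. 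A final universal substitution by $t + c_2 t^2 + \cdots + c_n t^n$ brings us to the simple case at level $n+1$, where the jet identity identifies $\{f'_{kj, n+1} - f_{kj, n+1}\}$ with the coboundary $\delta\{(U_j, \alpha_{j, n+1})\}$, completing the proof.

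The main obstacle will be the formal bookkeeping on $V_0$: since $w_0 = y^2 - x^3$ is not part of a coordinate system at $p_0$ and its differential vanishes there, the ``Taylor expansion in $w_0$'' has to be defined via the principal-ideal structure $\mathcal{I}_{U_0} = (w_0)$ and the description (\ref{eq:stalk_of_a_cusp}) of $\mathcal{O}_{C, p_0}$. Fortunately, the cocycle comparisons themselves all take place on overlaps $V_{0k}$ with $k > 0$, where $p_0 \notin V_k$ and genuine coordinates $(z_k, w_k)$ are available, so once each $\alpha_{0, \nu} \in \mathcal{O}_C(U_0)$ is defined correctly the computations above go through verbatim.
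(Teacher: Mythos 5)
Your proposal is correct and follows essentially the same route as the paper's proof: you normalize the globally constant ratio $w'_j/w_j|_C$ to $1$, show inductively that the coefficients at levels $\mu\leq n$ glue to global constants (the paper's assertion $(\mathrm{A})_\mu$, which the paper carries along as remainder functions $h_{k,\mu+1}\in\mathcal{O}(V_k)$ while you absorb them via universal substitutions — a cosmetic difference that also sidesteps your $V_0$-expansion bookkeeping), and conclude at level $n+1$ that the two cocycles differ by the coboundary $\delta\{(U_j,\alpha_{j,n+1})\}$. One caveat, present equally in the paper: before the rescaling the classes agree only up to the nonzero factor $c^{-n}$ coming from the constant ratio, so the invariant is the vanishing of $u_n(C,S)$ rather than the class itself, which is exactly what the lemma asserts.
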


\begin{proof}
It is enough to compare the obstruction classes given by two systems $\{(V_j, w_j)\}$, $\{(V_j, v_j)\}$ of type $n$, 
by taking a common refinement of two open coverings. 
For $j>0$ and $k \geq 0$, we put 
\[
\begin{array}{ll}
w_k & = w_j + f_{kj, n+1}(z_j)\cdot w_j^{n+1} + O(w_j^{n+2}), \\[2mm]
v_k & = v_j + g_{kj, n+1}(z_j)\cdot v_j^{n+1} + O(v_j^{n+2}),
\end{array}
\]
where $f_{kj, n+1}$ and $g_{kj, n+1}$ are holomorphic functions on $U_{jk}$. 
Since $(v_k/w_k)|_{U_k}$ is a holomorphic function without zeros and the above two equations yield 
\[
\frac{v_k}{w_k}=\frac{v_j}{w_j} \left(1+O(v_j,w_j) \right), 
\]
the system $\{(v_k/w_k)|_{U_k}\}$ defines a global holomorphic function on $C$, 
which is constant as $C$ is compact. 
So one may assume that $(v_k/w_k)|_{U_k} \equiv 1$ by multiplying each $v_k$ by a common nonzero constant, 
by which whether the obstruction class is $0$ or not remains unchanged. 
Then we have $v_k/w_k=1+h_{k,2} \cdot w_k$, or 
\begin{equation} \label{eq:relationvw2}
v_k=w_k+h_{k,2} \cdot w_k^2
\end{equation}
for any $k=0,1,\dots,N$, where $h_{k,2}$ is a holomorphic function on $V_k$. 
In particular, it follows that
\begin{equation} \label{eq:expressionvw}
v_k-v_j=g_{kj, n+1}(z)\cdot v_j^{n+1} + O(v_j^{n+2}) = g_{kj, n+1}(z)\cdot w_j^{n+1} + O(w_j^{n+2}). 
\end{equation}

Now, for a given integer $1 \leq \mu \leq n$,  we assume that 
\[
\mathrm{(A)_{\mu}} \hspace{10mm} v_k = w_k + c_2\cdot w_k^2 + \cdots + c_\mu\cdot w_k^\mu + h_{k,\mu+1}\cdot w_k^{\mu+1}
\]
for any $k=0,1,\dots,N$, where $c_2,\dots, c_{\mu} \in \mathbb{C}$ are constants independent of $k$, 
and $h_{k,\mu+1}$ is a holomorphic function on $V_k$ (relation (\ref{eq:relationvw2}) means that assertion $\mathrm{(A)_{1}}$ holds). 
Then one has
\[
\begin{array}{rl}
\displaystyle v_k = & w_k+\sum_{\nu=2}^{\mu} c_{\nu} \cdot w_k^\nu + h_{k,\mu+1}\cdot w_k^{\mu+1} \\[2mm]
= & \left\{ w_j + f_{kj, n+1}(z)\cdot w_j^{n+1} + O(w_j^{n+2}) \right\}+ \sum_{\nu=2}^{\mu} c_{\nu} \cdot \left\{ w_j + f_{kj, n+1}(z)\cdot w_j^{n+1} + O(w_j^{n+2}) \right\}^{\nu} \\
~& \hspace{65mm} + h_{k,\mu+1}\cdot \left\{ w_j + f_{kj, n+1}(z)\cdot w_j^{n+1} + O(w_j^{n+2}) \right\}^{\mu+1} \\[2mm]
= & w_j+\sum_{\nu=2}^{\mu} c_{\nu} \cdot w_j^\nu+ 
\left\{
\begin{array}{ll}
h_{k,\mu+1} \cdot w_j^{\mu+1} +O(w_j^{\mu+2}) & \text{if~} \mu<n \\[2mm]
(h_{k,n+1}+f_{kj,n+1}) \cdot w_j^{n+1} +O(w_j^{n+2}) & \text{if~} \mu=n
\end{array}
\right.
\end{array}
\]
and thus
\[
v_k-v_j=
\left\{
\begin{array}{ll}
(h_{k,\mu+1}-h_{j,\mu+1}) \cdot w_j^{\mu+1} +O(w_j^{\mu+2}) & \text{if~} \mu<n \\[2mm]
(h_{k,n+1}+f_{kj,n+1}-h_{j,n+1}) \cdot w_j^{n+1} +O(w_j^{n+2}) & \text{if~} \mu=n. 
\end{array}
\right.
\]
By comparing the above relation to (\ref{eq:expressionvw}), we have the following. 
If $\mu<n$, then $\{h_{k,\mu+1}|_{U_k}\}_k$ defines a global holomorphic function on $C$, 
which is constant: $h_{k,\mu+1}|_{U_k} \equiv c_{\mu+1}\in\mathbb{C}$, and thus satisfies $h_{k,\mu+1}=c_{\mu+1}+h_{k,\mu+2} \cdot w_k$, 
which means that $\mathrm{(A)_{\mu+1}}$ holds. 
On the other hand, if $\mu=n$, then we have
\[
\{(U_{jk}, g_{kj, n+1})\}-\{(U_{jk}, f_{kj, n+1})\} = \delta\{(U_j, h_{j,n+1}|_{U_j})\},
\]
which proves the lemma.  
\end{proof}


\begin{lemma}\label{lem:well-def_type}
Let $n \geq 1$ be a positive integer, and assume that there exists a system of type $n$. 
Then there exists a system of type $n+1$ if and only if $u_n(C, S)=0$. 
\end{lemma}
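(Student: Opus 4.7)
The plan is to prove both directions directly. For the ``only if'' direction, suppose a type $n+1$ system $\{(V_k, v_k)\}$ exists. It is in particular a type $n$ system, and by definition its coefficients $g_{kj,n+1}$ at index $n+1$ all vanish identically, so the \v{C}ech $1$-cocycle representing its $n$-th obstruction class is zero. By Lemma \ref{lem:well_def_obstr_class}, the vanishing of $u_n(C,S)$ is independent of the choice of type-$n$ system used to compute it, hence $u_n(C,S)=0$.

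For the ``if'' direction, assume $u_n(C,S)=[\{(U_{jk}, f_{kj,n+1})\}]=0$, so that $\{f_{kj,n+1}\}$ is a \v{C}ech coboundary: there exist $\alpha_k\in\mathcal{O}_C(U_k)$ satisfying $f_{kj,n+1}=\alpha_j-\alpha_k$ on $U_{jk}$ for $j>0,\, k\ge 0$. I extend each $\alpha_k$ to a holomorphic function $A_k$ on $V_k$: for $k\ge 1$ simply set $A_k(z_k,w_k):=\alpha_k(z_k)$, and for $k=0$ invoke Lemma \ref{lem:Linf_estim_of_ext_from_U0_to_V0} (using only the existence of an extension, not the $L^\infty$ bound, and shrinking $V_0$ if necessary) to obtain an extension of $\alpha_0$ across the cusp. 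I then define
$$v_k := w_k + A_k \cdot w_k^{n+1}, \qquad k=0,1,\ldots,N.$$
Upon shrinking each $V_k$ so that $1+A_k w_k^n$ is nowhere zero, $v_k$ is a holomorphic defining function of $U_k$ in $V_k$. Because the correction term $A_kw_k^{n+1}$ vanishes to order $n+1\ge 2$ along $U_k$, one has $dv_k|_{U_k}=dw_k|_{U_k}$, and consequently $dv_j/dv_k|_{U_{jk}}=dw_j/dw_k|_{U_{jk}}=1$, so the trivialization of $N_{C/S}$ is preserved.

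It remains to verify that $\{(V_k,v_k)\}$ is of type $n+1$. On $V_{jk}$ with $j>0$ and $k\ge 0$, expanding $w_k=w_j+f_{kj,n+1}w_j^{n+1}+O(w_j^{n+2})$ together with the elementary identities $w_k^{n+1}=w_j^{n+1}+O(w_j^{n+2})$ (valid since $2n+1\ge n+2$ for $n\ge 1$) and $A_k=\alpha_k+O(w_j)$ (since $A_k|_{U_{jk}}=\alpha_k$), I compute
$$v_k = w_j + \bigl(f_{kj,n+1}+\alpha_k\bigr)\,w_j^{n+1} + O(w_j^{n+2}) = w_j + \alpha_j\,w_j^{n+1} + O(w_j^{n+2}),$$
where the second equality uses the coboundary relation $f_{kj,n+1}=\alpha_j-\alpha_k$. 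Comparing with $v_j=w_j+\alpha_j\,w_j^{n+1}+O(w_j^{n+2})$ yields $v_k-v_j=O(w_j^{n+2})=O(v_j^{n+2})$, so all new coefficients $g_{kj,\nu}$ vanish for $2\le\nu\le n+1$, confirming that the system is of type $n+1$. The only nontrivial step is extending $\alpha_0$ holomorphically through the cuspidal point $p_0$, which is precisely what Lemma \ref{lem:Linf_estim_of_ext_from_U0_to_V0} supplies; everything else is formal bookkeeping parallel to \cite[\S 2]{U83}.
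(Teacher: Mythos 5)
Your proof is correct and follows essentially the same route as the paper's: represent the vanishing obstruction cocycle as a coboundary, extend the $0$-cochain holomorphically to each $V_k$, and correct the defining functions by $v_k = w_k + A_k\cdot w_k^{n+1}$ (the paper writes $v_k = w_k - G_k\cdot w_k^{n+1}$ with the opposite sign convention for the coboundary, which is equivalent), then verify $v_k = v_j + O(v_j^{n+2})$ by the same order-by-order expansion. The only cosmetic deviation is that you extend $\alpha_0$ across the cusp via Lemma \ref{lem:Linf_estim_of_ext_from_U0_to_V0} (after shrinking to ensure boundedness), where the paper simply invokes Steinness of $V_0$; both are valid.
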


\begin{proof}
It is trivial that if there exists a system of type $n+1$ then $u_n(C, S)=0$. 
So assume that $u_n(C, S)=0$. 
Let $\{(V_j, w_j)\}$ be a system of type $n$ with
\[
w_k = w_j + f_{kj, n+1}(z_j)\cdot w_j^{n+1} + O(w_j^{n+1})
\]
on $V_{jk}$ for $j>0$ and $k \geq 0$. 
From the assumption, there exists a holomorphic function $g_k$ on $U_k$ such that $-g_j+g_k=f_{kj, n+1}$ holds 
on $U_{jk}$ for any $j,k \geq 0$. 
Since $V_k$ is Stein, there exists a holomorphic function $G_k$ on $V_k$ such that $g_k=G_k|_{U_k}$. 
By putting
$v_k := w_k - G_k\cdot w_k^{n+1}$, 
we have 
\begin{align*}
v_k &= w_k - G_k\cdot w_k^{n+1}\\
&= w_j + f_{kj, n+1}(z)\cdot w_j^{n+1}- g_k \cdot w_j^{n+1} +O(w_j^{n+2})\\
&= w_j - g_j \cdot w_j^{n+1} +O(w_j^{n+2})\\
&= w_j - G_j \cdot w_j^{n+1} +O(w_j^{n+2})=v_j + O(v_j^{n+2})
\end{align*}
on $V_{jk}$  for $j>0$ and $k \geq 0$. The lemma is proved. 
\end{proof}

The above lemma shows that the following dichotomy holds:
\begin{itemize}
\item There exists a positive integer $n$, depending only on $(C, S)$ from Lemma \ref{lem:well_def_obstr_class}, 
such that $(C, S)$ admits a system of type $n$ but $u_n(C, S)\not=0$. \\
\item For any positive integer $n$, $(C, S)$ admits a system of type $n$ and $u_n(C, S)=0$. 
\end{itemize}
In the former case, $(C, S)$ is said to be {\it of finite type}, while in the latter case, it is said to be {\it of infinite type}. 

\section{Linearization of a neighborhood}\label{section:linearlization}
In this section, we will prove the following theorem. 

\begin{theorem}\label{thm:neighborhood_of_infinite_type}
Let $C$ be a rational curve with an ordinary cusp embedded in a non-singular complex surface $S$, and assume that $N_{C/S}$ is holomorphically trivial and $(C,S)$ is of infinite type. 
Then there exist an open covering $\{W_j\}$ of a neighborhood of $C$ and a defining function $u_j$ of $C\cap W_j$ on each $W_j$ such that $u_k=u_j$ on $W_{jk}=W_j\cap W_k$ for any $j$ and $k$. 
\end{theorem}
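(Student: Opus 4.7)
The plan is to implement an iterative Ueda-type linearization: starting from any system of type $1$, I would construct inductively systems of higher and higher type whose defining functions $w_k^{(n)}$ converge to a single coherent family $\{u_k\}$ agreeing on all overlaps. The hypothesis that $(C,S)$ is of infinite type guarantees via Lemma \ref{lem:well-def_type} that every Ueda class $u_n(C,S)$ vanishes, so the iteration never halts; and the hypothesis that $N_{C/S}$ is holomorphically trivial (not merely topologically trivial) puts us in the ``trivial representation'' case of Ueda theory, so no small-divisor or Diophantine obstruction will enter the convergence estimates.

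For the inductive step, suppose we already have $\{(V_k,w_k^{(n)})\}$ of type $n$, with
\[
w_k^{(n)}=w_j^{(n)}+f_{kj,n+1}^{(n)}(z_j)\cdot (w_j^{(n)})^{n+1}+O\bigl((w_j^{(n)})^{n+2}\bigr).
\]
Since $u_n(C,S)=0$, the $1$-cocycle $\{f_{kj,n+1}^{(n)}\}$ is a $1$-coboundary; applying Lemma \ref{lem:H^1ofC} I would obtain a $0$-cochain $\{\alpha_k^{(n)}\}$ with $\delta\{\alpha_k^{(n)}\}=\{f_{kj,n+1}^{(n)}\}$ and the uniform bound
\[
\max_k\sup_{U_k}|\alpha_k^{(n)}|\leq K_0\cdot\max_{j,k}\sup_{U_{jk}}|f_{kj,n+1}^{(n)}|.
\]
For $k\geq 1$, the product structure $V_k\cong U_k\times\{|w_k|<\ve_1\}$ furnishes a trivial holomorphic extension of $\alpha_k^{(n)}$ to $V_k$ preserving the sup-norm, while for $k=0$, Lemma \ref{lem:Linf_estim_of_ext_from_U0_to_V0} produces an extension $\widetilde{\alpha}_0^{(n)}\in\mathcal{O}_S(V_0)$ at the price of a factor $3$ in the $L^\infty$-bound. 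Following the computation of Lemma \ref{lem:well-def_type}, setting $w_k^{(n+1)}:=w_k^{(n)}-\widetilde{\alpha}_k^{(n)}\cdot(w_k^{(n)})^{n+1}$ then produces a system of type $n+1$.

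For the convergence step, I would shrink the neighborhood so that $\|w_k^{(1)}\|_\infty<\ve$ for a small $\ve>0$ chosen at the end, and set $M_n:=\max_{j,k}\sup_{U_{jk}}|f_{kj,n+1}^{(n)}|$. Each correction is then bounded by $3K_0 M_n\ve^{n+1}$ in $L^\infty$-norm. The new coefficients $f_{kj,\nu}^{(n+1)}$ can be read off by expanding the relation $w_k^{(n+1)}=w_j^{(n+1)}+\sum_\nu f_{kj,\nu}^{(n+1)}\cdot(w_j^{(n+1)})^\nu$ and extracting Taylor coefficients via Cauchy estimates on slightly shrunken polydiscs. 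A standard majorant bookkeeping, patterned on \cite[\S 4]{U83}, should yield a recursion of the form $M_{n+1}\leq A(1+M_n)^B$ with $A,B$ depending only on the fixed open cover, from which induction gives a geometric bound $M_n\leq CR^n$. Choosing $\ve$ so that $3K_0 R\ve<1/2$ makes $\sum_n 3K_0 M_n\ve^{n+1}$ summable, so $\{w_k^{(n)}\}$ converges uniformly on some $W_k\subset V_k$ to a holomorphic defining function $u_k$ of $U_k\cap W_k$. Since $u_k-u_j$ vanishes to arbitrary order along $C$ on $W_{jk}$, identity then forces $u_k\equiv u_j$ there.

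The hardest part will be controlling the recursion for $M_n$ in the presence of the cusp. In the smooth-curve setting of \cite{U83} all $V_k$ look alike, but here $V_0$ is a polydisc of asymmetric radii $\ve_0^2$ and $2\ve_0^3$ forced by the parametrization $(\zeta^2,\zeta^3)$, and the extension constant $3$ of Lemma \ref{lem:Linf_estim_of_ext_from_U0_to_V0} (absent from the smooth case) enters at every step. One must therefore check that Cauchy estimates on such an asymmetric polydisc, together with the refined cover hinted at in Remark \ref{rmk:refinement_needed}, still produce a recursion whose constants $A,B$ are uniformly bounded in $n$. Since the factor $3$ is a fixed numerical constant and the singular locus is a single point, this should be essentially a bookkeeping issue rather than a genuine obstruction, but it is the one place where the cuspidal case genuinely departs from the non-singular theory.
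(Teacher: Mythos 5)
Your inductive step is sound and uses the same ingredients as the paper (vanishing of $u_n(C,S)$, the bounded coboundary solution of Lemma \ref{lem:H^1ofC}, trivial extension on $V_k$ for $k\geq 1$, and the factor-$3$ extension of Lemma \ref{lem:Linf_estim_of_ext_from_U0_to_V0} on $V_0$), but your convergence step has a genuine gap, and it is precisely where your scheme departs from \cite[\S 4]{U83} rather than following it. Because you iteratively replace the defining functions, at stage $n$ the transition coefficients $f^{(n)}_{kj,\nu}$ for \emph{all} $\nu\geq n+1$ have been modified $n$ times, so bounding $M_{n+1}$ requires uniform control of these entire tails on domains that do not shrink with $n$; your ``slightly shrunken polydiscs,'' repeated infinitely often, either exhaust the neighborhood or degrade the Cauchy constants in a way you never track. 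You defer all of this to an unproved recursion $M_{n+1}\leq A(1+M_n)^B$, and even granting that recursion, it does not imply your asserted geometric bound $M_n\leq CR^n$: for $B>1$ it is compatible with $\log M_n\sim cB^n$, i.e.\ doubly exponential growth, and only $B=1$ (or an a priori bound on $M_n$) yields geometric decay of the corrections. Nothing in your argument pins this down, so the summability of $\sum_n 3K_0M_n\varepsilon^{n+1}$ --- the heart of the proof --- is unestablished.

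The paper avoids this difficulty entirely by never changing the system $\{(V_j,w_j)\}$: Proposition \ref{prop:constr_of_Fjnu} builds a single formal transformation $w_j=u_j+\sum_{\nu\geq 2}F_{j,\nu}u_j^\nu$ relative to the \emph{fixed} transition data $f_{kj,\nu}$, with every coefficient estimated against one majorant series $A(X)$ determined by the functional equation (\ref{eq:func_eq_def_A}); positivity of its radius of convergence is a one-shot application of the implicit function theorem, and all Cauchy estimates live on the fixed two-tier covering $U_j^*\Subset U_j$ with the single constant $R$ of (\ref{eqn:polydiskcontV}) (the passage from estimates on $U_j\cap U_k^*$ to estimates on $U_{jk}$ via the cocycle relation, at the cost of a factor $2$, is Lemma \ref{lem:estim_IandH}). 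You are also missing the paper's key device for the cusp: near $\partial U_0$ it chooses coordinates $z_j:=\sqrt{x}$, $w_j:=w_0$ as in (\ref{eqn:choiceofwj}), which forces $f_{0j,\nu}\equiv 0$ in (\ref{ineq:estim_of_fnu}), so the transitions into $V_0$ are literally trivial and the cusp enters only through Lemma \ref{lem:Linf_estim_of_ext_from_U0_to_V0}; your plan of handling $V_0$ purely by Cauchy estimates on the asymmetric polydisc would have to confront nonzero $f_{0j,\nu}$ without this normalization. To repair your write-up, either carry out the full tail bookkeeping uniformly in $n$ (in effect reproving the majorant estimates at each stage), or switch to the fixed-system majorant formulation, which is what the paper does.
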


This theorem can be shown by the similar argument in \cite[\S 4]{U83}, although there is a difficulty coming from the fact that $w_0$ is not a coordinate function. 
This difficulty can be avoided by basically the same argument as in \cite[\S 4.2]{K17} by using carefully constructed local coordinates on neighborhoods of the ordinary cusp, which will be explained in \S \ref{subsection:suitable_covering}. 

\subsection{A suitable covering and coordinates}\label{subsection:suitable_covering}

Let us use the notations $U_0, V_0, U_{\rm reg}$, $V_{\rm reg}$ and $(x, y)$, $(z_{\rm reg}, w_{\rm reg})$ given in \S \ref{section:notation}, 
and consider the holomorphic function $w_{\rm sing}:=y^2-x^3$ on $V_0$. 
Under the assumption that $N_{C/S}$ is holomorphically trivial, there exist non-vanishing holomorphic functions 
$e_{\rm reg} : U_{\rm reg} \to \mathbb{C}$ and $e_{\rm sing} : U_0 \to \mathbb{C}$ such that 
$w_0 := e_{\rm sing}\cdot w_{\rm sing}$ and $w_1 := e_{\rm reg}\cdot w_{\rm reg}$ satisfy 
$dw_0=dw_1$ on  $U_{\rm reg} \cap U_0$, which defines a global section of $N_{C/S}$ on $C$. 


First fix finitely many (sufficiently small) simply connected domains $U_2, U_3, \dots, U_N$ and relatively compact ones 
$U_j^* \Subset U_j$ for $j=2,\dots,N$ so that $\{U_2^*, U_3^*, \dots, U_N^*\}$ covers the boundary $\partial U_0$ of $U_0$, which is homeomorphic to a circle. 
For each $j=2,\dots,N$, by using coordinates 
\begin{equation} \label{eqn:choiceofwj}
z_j := \sqrt{x},\quad w_j := w_0\quad (j=2, 3, \dots, N)
\end{equation}
around $U_j$, where a branch of the square roots is fixed for each $j$, we define $V_j$ to be 
the connected component of $\{(z_j, w_j)\mid (z_j, 0)\in U_j\ |w_j|<\ve_1\}$ containing $U_j$ 
with $\ve_1\ll \ve_0$. 
Moreover, simply connected domains $U_1^* \Subset U_1$ in $C$ and an open neighborhood $V_1$ of $U_1$ in $S$ are chosen so that 
$U_1$ is relatively compact in $C\setminus \overline{U_0}$, 
$\{U_0^*, U_1^*, U_2^*, \dots, U_N^*\}$ is an open Stein covering of $C$ with $U_0^*:=U_0$, $V_0\cap V_1=\emptyset$ and 
\[
V_1=\{(z_1, w_1) \mid (z_1, 0)\in U_1,\ |w_1|<\ve_1\}
\]
for coordinates $(z_1,w_1)=(z_{\rm reg}, e_{\rm reg} \cdot w_{\rm reg})$. 
Finally, fix $R>0$ so that 
\begin{equation} \label{eqn:polydiskcontV}
\left\{(z_j, w_j) \in V_j \left|\, (z_j, 0)\in U_k^*\cap U_j,\ |w_j| = \frac{1}{R}\right.\right\} \subset V_k
\end{equation}
for each $(j, k)\in \{1, 2, 3, \dots, N\}^2$ and $(j, k)\in \{2, 3, \dots, N\} \times \{0\}$. 
Note that the choice of radii $\ve_0^2$ and $2\ve_0^3$ in $V_0$ is used here.  


\begin{lemma}\label{lem:estim_KS+ext}
There exists a positive constant $K\geq1$ such that for any $1$-coboundary
\[
\{(U_{jk}, h_{jk})\} \in \check{B}^1(\{U_j\}_{j=0}^N, \mathcal{O}_C),
\]
there exist holomorphic functions $F_j \in \mathcal{O}_S(V_j)$ with $j=0, 1, \dots, N$, 
depending only on $z_j$ for $j>0$, such that 
\[
\delta\{(U_j,\ F_j|_{U_j})\} = \{(U_{jk}, h_{jk})\}
\]
and
\[
\max_{0\leq j\leq N}\sup_{V_j}|F_j| \leq K\cdot \max_{0\leq j, k\leq N}\sup_{U_j\cap U_k} |h_{jk}|. 
\]
\end{lemma}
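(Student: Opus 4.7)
The plan is to chain together the two $L^\infty$-type ingredients already established: Lemma \ref{lem:H^1ofC} produces a $0$-cochain on $C$ realizing a given coboundary with sup-norm controlled by a universal constant $K_0$ times $M := \max_{0 \leq j,k \leq N}\sup_{U_j \cap U_k}|h_{jk}|$, while Lemma \ref{lem:Linf_estim_of_ext_from_U0_to_V0} extends any bounded holomorphic function on $U_0$ to $V_0$ at the cost of a uniform factor of $3$. The rest is a bookkeeping combination of these two estimates.

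First I would apply Lemma \ref{lem:H^1ofC} to $\{(U_{jk}, h_{jk})\}$ to obtain a $0$-cochain $\{(U_j, \alpha_j)\} \in \check{C}^0(\{U_j\}_{j=0}^N, \mathcal{O}_C)$ with $\delta\{\alpha_j\} = \{h_{jk}\}$ and $\max_j \sup_{U_j}|\alpha_j| \leq K_0 M$. The remaining task is to lift each $\alpha_j$ to an $F_j \in \mathcal{O}_S(V_j)$ of the prescribed form while preserving the sup-norm up to a uniform multiplicative constant.

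For $j > 0$, the coordinates $(z_j, w_j)$ realize $V_j$ as a domain fibered over $U_j \cong \{w_j = 0\}$ by the projection $(z_j, w_j) \mapsto (z_j, 0)$, so flat extension is immediately available: I would set $F_j(z_j, w_j) := \alpha_j(z_j)$, which is holomorphic on $V_j$, depends only on $z_j$, restricts to $\alpha_j$ on $U_j$, and satisfies $\sup_{V_j}|F_j| = \sup_{U_j}|\alpha_j| \leq K_0 M$. For $j = 0$ the curve $U_0$ is singular and this flat-extension trick is unavailable; instead I would invoke Lemma \ref{lem:Linf_estim_of_ext_from_U0_to_V0} directly on $\alpha_0 \in \mathcal{O}_C(U_0) \cap L^\infty(U_0)$ to obtain $F_0 \in \mathcal{O}_S(V_0)$ with $F_0|_{U_0} = \alpha_0$ and $\sup_{V_0}|F_0| \leq 3 \sup_{U_0}|\alpha_0| \leq 3 K_0 M$.

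Taking $K := 3 K_0$, one checks that $\delta\{(U_j, F_j|_{U_j})\} = \delta\{(U_j, \alpha_j)\} = \{(U_{jk}, h_{jk})\}$ and that the required sup-norm bound holds uniformly in $j$. I do not anticipate a serious obstacle: the lemma is essentially an assembly of Lemmas \ref{lem:H^1ofC} and \ref{lem:Linf_estim_of_ext_from_U0_to_V0}. The only delicate point is that near the cusp, where no suitable coordinate function on $C$ exists, one cannot extend by a coordinate projection and must instead invoke the explicit cusp-extension Lemma \ref{lem:Linf_estim_of_ext_from_U0_to_V0}, and it is exactly this step that contributes the factor $3$ in the final constant $K$.
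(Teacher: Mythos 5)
Your argument coincides with the paper's own proof: both apply Lemma \ref{lem:H^1ofC} to solve the coboundary equation on $C$ with the bound $K_0 M$, then extend flatly by $F_j(z_j,w_j):=\alpha_j(z_j)$ for $j>0$ and via Lemma \ref{lem:Linf_estim_of_ext_from_U0_to_V0} for $j=0$, yielding the factor $3$. The only cosmetic discrepancy is the final constant: since the statement requires $K\geq 1$, you should set $K:=\max\{3K_0,1\}$ (as the paper does) rather than $K:=3K_0$.
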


\begin{proof}
Lemma \ref{lem:H^1ofC} says that there exists a positive constant $K_0>0$ such that any $1$-coboundary
\[
\{(U_{jk}, h_{jk})\} \in \check{B}^1(\{U_j\}_{j=0}^N, \mathcal{O}_C)
\]
admits holomorphic functions $g_j \in \mathcal{O}_C(U_j)$ ($j=0, 1, \dots, N$) that satisfy
\[
\delta\{(U_j,\ g_j)\} = \{(U_{jk}, h_{jk})\}
\qquad \text{and} \qquad 
\max_{0\leq j\leq N}\sup_{U_j}|g_j| \leq K_0\cdot \max_{0\leq j, k\leq N}\sup_{U_j\cap U_k}|h_{jk}|. 
\]
We extend $g_j$ to a holomorphic function $F_j$ on $V_j$, by using Lemma \ref{lem:Linf_estim_of_ext_from_U0_to_V0} for $j=0$, 
and by putting $F_j(z_j, w_j):=g_j(z_j)$ for $j>0$. 
Then the lemma follows for $K := \max\{ 3K_0,1\}$. 
\end{proof}

\subsection{Extensions of holomorphic functions with estimates}\label{subsection:extensions_hol_functions_estimates}

Under the assumptions in Theorem \ref{thm:neighborhood_of_infinite_type} and the notations given in \S \ref{subsection:suitable_covering}, 
the system $\{(V_j, w_j)\}_j$ satisfies  
\[
w_k = w_j + \sum_{\nu=2}^\infty f_{kj, \nu}(z_j)\cdot w_j^\nu, 
\]
for $j>0$ and $k \geq 0$, where $f_{kj, \nu}$ is a holomorphic function on $U_{jk}$.  
In this subsection, we will prove the following proposition: 
\begin{proposition}\label{prop:constr_of_Fjnu}
There exists a sequence $\{A_\nu\}_{\nu\geq 2}$ of positive real numbers and holomorphic functions $F_{j, \nu}\colon V_j\to \mathbb{C}$ 
for $j=0, 1,\dots,N$ and $\nu=2, 3, \dots$, which depend only on $z_j$ for $j \geq 1$, such that the following hold: 
\begin{enumerate}
\item[(i)] The power series $X + \textstyle\sum_{\nu=2}^\infty A_\nu X^\nu\in \mathbb{R}[[X]]$ has positive radius of convergence. \\
\item[(ii)] $\sup\{|F_{j, \nu}(p)| \mid p\in V_j\} \leq A_\nu$ holds for any $j=0, 1,\dots,N$ and $\nu=2, 3, \dots$. \\
\item[(iii)] For any $n \geq 2$, let $\{u_j\}_{j \ge 0}$ be holomorphic functions in neighborhoods of $U_j$'s obtained as solutions of the functional equations
\begin{equation}\label{eq:func_eq}
w_j = u_j + \sum_{\nu=2}^{n} F_{j, \nu}\cdot u_j^\nu \qquad (j=0,1,\dots,N)
\end{equation}
on $V_j$. 
Then $\{u_j\}_{j \ge 0}$ satisfy $u_k=u_j+O(u_j^{n+1})$ for any $j,k \geq 0$. 
\end{enumerate}
\end{proposition}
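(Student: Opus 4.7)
The plan is to build $\{F_{j,\nu}\}_{\nu \geq 2}$ and the bounding sequence $\{A_\nu\}_{\nu \geq 2}$ inductively on $\nu$, following the Ueda-type linearization scheme in \cite[\S 4]{U83} adapted to the cuspidal setting as in \cite[\S 4.2]{K17}. The key inputs will be the vanishing of every Ueda class $u_n(C,S)$ guaranteed by the infinite type assumption together with Lemma \ref{lem:well-def_type}, and the quantitative coboundary splitting with extension to $V_j$ provided by Lemma \ref{lem:estim_KS+ext}. Assertion (i) will be proved by a majorant-series argument.

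At the inductive step $n \geq 2$, I would assume $F_{j,2},\dots,F_{j,n-1}$ are already constructed with $\sup_{V_j}|F_{j,\mu}| \leq A_\mu$ for all $\mu<n$. Let $u_j^{(n-1)}$ denote the unique holomorphic solution of \eqref{eq:func_eq} truncated at $n-1$ with $u_j^{(n-1)} = w_j + O(w_j^2)$; this is defined on a neighborhood of $U_j$ whose size depends only on the $A_\mu$'s, via the implicit function theorem. An inductive application of the cohomological calculation in the proof of Lemma \ref{lem:well-def_type} will show that $\{(V_j, u_j^{(n-1)})\}$ is itself a system of type $n-1$, so one may write
\[
u_k^{(n-1)} - u_j^{(n-1)} = g_{kj,n}(z_j) \cdot (u_j^{(n-1)})^n + O\bigl((u_j^{(n-1)})^{n+1}\bigr)
\]
on $V_{jk}$ with $g_{kj,n} \in \mathcal{O}_C(U_{jk})$. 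Because $(C,S)$ is of infinite type, the class $[\{(U_{jk}, g_{kj,n})\}]$ coincides with $u_{n-1}(C,S) = 0$, so $\{g_{kj,n}\}$ is a $1$-coboundary. Lemma \ref{lem:estim_KS+ext} then produces $F_{j,n} \in \mathcal{O}_S(V_j)$, depending only on $z_j$ for $j > 0$, with $\sup_{V_j}|F_{j,n}| \leq K \cdot \max_{j,k}\sup_{U_{jk}}|g_{kj,n}|$; a direct comparison of the functional equations truncated at $n-1$ and at $n$ then verifies assertion (iii).

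For (i), I would derive an effective majorant inequality for $A_n$. Expanding $u_k^{(n-1)}$ and $u_j^{(n-1)}$ via \eqref{eq:func_eq} and substituting into the original expansion $w_k - w_j = \sum_{\mu \geq 2} f_{kj,\mu}(z_j) w_j^\mu$ realizes $g_{kj,n}$ as a universal polynomial in the values $f_{kj,\mu}|_{U_{jk}}$ for $2 \leq \mu \leq n$ and $F_{j,\mu}|_{U_{jk}}$ for $2 \leq \mu \leq n-1$. Cauchy estimates on $V_j$ combined with the inclusion \eqref{eqn:polydiskcontV} furnish uniform bounds $\sup_{U_{jk}}|f_{kj,\mu}| \leq M R^{\mu-1}$ for constants $M, R > 0$. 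Setting $A_n$ to be $K$ times the resulting polynomial expression, the formal series $\Phi(X) := X + \sum_{\nu \geq 2} A_\nu X^\nu$ admits a rational majorant equation of the shape
\[
\Phi(X) \prec X + \frac{MKX^2}{1 - R\Phi(X)} + \frac{K\Phi(X)^2}{1-\Phi(X)},
\]
whose right-hand side is analytic at $(X, \Phi) = (0,0)$ with vanishing derivative in $\Phi$; the implicit function theorem yields an analytic solution $\Phi$ near $X=0$, which dominates $X + \sum A_\nu X^\nu$ and gives the desired positive radius of convergence.

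The main obstacle is the bookkeeping needed to turn the recursion for $g_{kj,n}$, which mixes the original cocycle $\{f_{kj,\mu}\}$ with the feedback of the already-chosen $F_{j,\mu}$'s through the inversion of the truncated functional equation, into an honest rational majorant equation at which the implicit function theorem can be applied. Once this bookkeeping is in place, the passage from formal construction to analytic estimates is routine. The cuspidal singularity at $p_0$ is essentially transparent at this stage: its combinatorial complications are absorbed into the uniform constants produced by Lemmas \ref{lem:Linf_estim_of_ext_from_U0_to_V0} and \ref{lem:estim_KS+ext} together with the choice of coordinates fixed in \S \ref{subsection:suitable_covering}.
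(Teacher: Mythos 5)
Your proposal is correct and follows essentially the same route as the paper's proof: an induction in which the truncated functional equation is solved by the implicit function theorem, the resulting order-$(n{+}1)$ discrepancy cocycle is identified (via Lemma \ref{lem:well_def_obstr_class}) with the vanishing Ueda class and split with uniform bounds by Lemma \ref{lem:estim_KS+ext}, and convergence is obtained from a majorant series defined by a rational functional equation solved by the implicit function theorem. The only deviations are cosmetic --- an index shift in the induction, the precise shape of the majorant equation (the paper uses $A(X)-X = 2KR(1+MR)\,(A(X))^2/(1-RA(X))$, exploiting that $f_{0j,\nu}\equiv 0$ thanks to the choice $w_j=w_0$ near the cusp), and the normalization of the Cauchy estimates --- none of which affects the argument.
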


\begin{proof}
First we notice that from the choice of coordinates given in (\ref{eqn:choiceofwj}),
that of $R$ given in (\ref{eqn:polydiskcontV}) and the Cauchy estimates,  there exists a positive number 
$M>0$ such that 
\begin{equation}\label{ineq:estim_of_fnu}
\begin{cases}
\sup\{|f_{kj, \nu}(z_j)| \mid (z_j, 0)\in U_j\cap U_k^* \} \leq MR^\nu & \text{if}\ k>0\\
f_{kj, \nu} \equiv 0 & \text{if}\ k=0 
\end{cases}
\end{equation}
holds for any $\nu \geq 2$. 
With the constant $K\geq1$ given in Lemma \ref{lem:estim_KS+ext}, 
choose a sequence $\{A_\nu\}_{\nu=2}^\infty$ of positive numbers so that the series
\[
A(X) := X + \sum_{\nu=2}^\infty A_\nu\cdot X^\nu
\]
is a solution of the functional equation
\begin{equation}\label{eq:func_eq_def_A}
A(X) - X = 2KR(1+MR)\cdot \frac{(A(X))^2}{1-RA(X)}. 
\end{equation} 
From inductive arguments, it is easily seen that functional equation (\ref{eq:func_eq_def_A}) uniquely determines $A_\nu$,
which is a positive number (e.g. $A_2=2KR(1+MR)$). 
Moreover, it follows from the implicit function theorem that $A(X)$ has positive radius of convergence. 
This proves assertion (i). 

Lemma \ref{lem:estim_KS+ext} shows that there exist holomorphic functions $F_{j, 2} \in \mathcal{O}_S(V_j)$ with $j=0,1,\dots,N$ 
that satisfy $-F_{j, 2}+F_{k, 2}=f_{kj, 2}$ on $U_{jk}$ and 
\[
\max_{0\leq j\leq N}\sup_{V_j}|F_{j, 2}| \leq K\cdot \max_{0\leq j, k\leq N}\sup_{U_j\cap U_k} |f_{kj, 2}|. 
\]
Since $\{U_0^*, U_1^*, \dots, U_N^*\}$ is an open covering of $C$, 
for each point $p\in U_j\cap U_k$, there exists $\ell\in \{0, 1, 2, \dots, N\}$ such that $p\in U_\ell^*$. 
Then estimate (\ref{ineq:estim_of_fnu}) shows that 
\[
|f_{kj, 2}(p)|
=|-f_{\ell k, 2}(p) + f_{\ell j, 2}(p)| \leq \sup_{U_k\cap U_\ell^*}|f_{\ell k, 2}|
+ \sup_{U_j\cap U_\ell^*}|f_{\ell j, 2}|
\leq 2MR^2,
\]
which proves assertion (ii) for $\nu=2$. 
Moreover it follows from the implicit function theorem that the functional equation (\ref{eq:func_eq}) with $n=2$ has solutions $\{u_j\}_{j \ge 0}$, 
which satisfy $u_k=u_j+O(u_j^3)$ for any $j,k \geq 0$. 
This proves assertion (iii) for $n=2$. 

Now, for a positive integer $n \geq 2$, assume that assertions (ii) for $\nu=2,\dots,n$ and (iii) for $n$ hold. 
The implicit function theorem says that there exist holomorphic functions $\{v_j\}_{j \ge 0}$ in neighborhoods of $U_j$'s 
obtained as solutions of the functional equations
\[
w_j = v_j + \sum_{\mu=2}^{n} F_{j, \mu}\cdot v_j^\mu \qquad (j=0,1,\dots,N)
\]
on $V_j$. Then $\{v_j\}_{j \ge 0}$ satisfy 
\begin{equation}\label{eq:u_exp_equalupton+1}
v_k=v_j+O(v_j^{n+1})
\end{equation}
on $U_{jk}$ for any $j,k \geq 0$ from assertion (iii). 
By expanding $F_{k, \nu}$ on $V_{jk}$ in terms of coordinates $(z_j,w_j)$ as 
\[
F_{k, \nu} = \sum_{m=0}^\infty G_{kj, \nu, m}(z_j)\cdot w_j^m
\]
for $j \geq 1$ and $k \geq 0$, we have 
\begin{align*}
w_k &= v_k + \sum_{\nu=2}^n F_{k, \nu}\cdot v_k^\nu 
= v_k + \sum_{\nu=2}^n F_{k, \nu}\cdot v_j^\nu + O(v_j^{n+2}) \\
&= v_k + \sum_{\nu=2}^n \sum_{m=0}^\infty G_{kj, \nu, m}(z_j)\cdot w_j^m\cdot v_j^\nu + O(v_j^{n+2}) \\
&= v_k + \sum_{\nu=2}^n \sum_{m=0}^\infty G_{kj, \nu, m}(z_j)\cdot \left(v_j + \sum_{\mu=2}^n F_{j, \mu}(z_j)\cdot v_j^\mu\right)^m\cdot v_j^\nu + O(v_j^{n+2}) 
\end{align*}
and
\begin{align*}
w_k &= w_j + \sum_{\nu=2}^\infty f_{kj, \nu}(z_j)\cdot w_j^\nu \\
&= v_j + \sum_{\nu=2}^n F_{j, \nu}(z_j)\cdot v_j^\nu + \sum_{\nu=2}^\infty f_{kj, \nu}(z_j)\cdot \left(v_j + \sum_{\mu=2}^n F_{j, \mu}(z_j)\cdot v_j^\mu\right)^\nu, 
\end{align*}
which leads to 
\begin{align}\label{eq:main}
-v_k+v_j \hspace{-4mm} & \\
= &\sum_{\nu=2}^n \sum_{m=0}^\infty G_{kj, \nu, m}(z_j)\cdot \left(v_j + \sum_{\mu=2}^n F_{j, \mu}(z_j)\cdot v_j^\mu\right)^m\cdot v_j^\nu \nonumber \\
&-\sum_{\nu=2}^n F_{j, \nu}(z_j)\cdot v_j^\nu - \sum_{\nu=2}^\infty f_{kj, \nu}(z_j)\cdot \left(v_j + \sum_{\mu=2}^n F_{j, \mu}(z_j)\cdot v_j^\mu\right)^\nu 
+ O(v_j^{n+2}) \nonumber \\
= &\sum_{\nu=2}^n F_{k, \nu}(z_j)\cdot v_j^\nu 
+\sum_{\nu=2}^n \sum_{m=1}^\infty G_{kj, \nu, m}(z_j)\cdot \left(v_j + \sum_{\mu=2}^n F_{j, \mu}(z_j)\cdot (v_j)^\mu\right)^m\cdot v_j^\nu \nonumber \\
&-\sum_{\nu=2}^n F_{j, \nu}(z_j)\cdot v_j^\nu - \sum_{\nu=2}^\infty f_{kj, \nu}(z_j)\cdot \left(v_j + \sum_{\mu=2}^n F_{j, \mu}(z_j)\cdot v_j^\mu\right)^\nu 
+ O(v_j^{n+2}), \nonumber
\end{align}
since $G_{kj, \nu, 0}(z_j) = F_{k, \nu}(z_j)$. Put
\[
H_{kj, \ell}(z_j) := 
\left[\!\!\left[\sum_{\nu=2}^n \sum_{m=1}^\infty G_{kj, \nu, m}(z_j)\cdot \left(X + \sum_{\mu=2}^n F_{j, \mu}(z_j)\cdot X^\mu\right)^m\cdot X^\nu\  \right]\!\!\right]_\ell, 
\]
\[
I_{kj, \ell}(z_j) := 
\left[\!\!\left[\sum_{\nu=2}^\infty f_{kj, \nu}(z_j)\cdot \left(X + \sum_{\mu=2}^n F_{j, \mu}(z_j)\cdot X^\mu\right)^\nu \, \right]\!\!\right]_\ell, 
\]
and 
\[
J_{kj, \ell} := - H_{kj, \ell} + I_{kj, \ell}, 
\]
where for $B(X) \in \mathbb{C}[[X]]$, $[\![B(X)]\!]_\ell$ stands for the coefficient of $X^\ell$ in $B(X)$. 
Note that $H_{kj, \ell}, I_{kj, \ell}$ and $J_{kj, \ell}$ are holomorphic functions on $U_{jk}$ depending only on $\{F_{j, \nu}\}_{\nu<\ell}$ 
for $\ell=2, 3, \dots, n+1$. 
Relations (\ref{eq:u_exp_equalupton+1}) and (\ref{eq:main}) mean that 
\begin{equation}\label{eq:J_ueda_class_1cocycle}
v_k = v_j +J_{kj, n+1}(z_j)\cdot v_j^{n+1} + O(v_j^{n+2}), 
\end{equation}
and assumption $u_n(C, S)=0$ shows that 
\[
\{(U_{jk}, J_{kj, n+1})\} \in\check{B}^1(\{U_j\}_{j=0}^N, \mathcal{O}_C). 
\]
It follows from Lemma \ref{lem:estim_KS+ext} and the following Lemma \ref{lem:estim_IandH} that 
there exist holomorphic functions $F_{n+1, j} \in \mathcal{O}_S(V_j)$ for $j=0,1,\dots,N$, which depend only on $z_j$ for $j \geq 1$, such that 
\[
\delta\{(U_j,\ F_{n+1, j}|_{U_j})\} = \{(U_{jk}, J_{kj, n+1})\}
\]
and
\[
\max_{0\leq j\leq N}\sup_{V_j}|F_{n+1, j}| \leq A_{n+1},
\]
which proves assertion (ii) for $\nu=n+1$. Moreover, it follows from the discussion inducing relation (\ref{eq:J_ueda_class_1cocycle}) 
that assertion (iii) for $n+1$ holds. 
\end{proof}

\begin{lemma}\label{lem:estim_IandH}
We have the estimate
\[
\max_{0\leq j, k\leq N}\sup_{U_{jk}}|J_{kj, n+1}|
\leq \frac{A_{n+1}}{K}. 
\]
\end{lemma}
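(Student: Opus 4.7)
The plan is to run a majorant-series argument: bound each holomorphic function defining $H_{kj,n+1}$ and $I_{kj,n+1}$ via Cauchy estimates, assemble the resulting bound on $|J_{kj,n+1}|$ as the coefficient of $X^{n+1}$ in an explicit formal power series in the $A_\mu$'s, and then compare with the coefficient of $X^{n+1}$ extracted from the functional equation (\ref{eq:func_eq_def_A}) defining the $A_\nu$.

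First I would derive Cauchy estimates for the coefficients $G_{kj,\nu,m}(z_j)$ appearing in the expansion $F_{k,\nu}=\sum_m G_{kj,\nu,m}(z_j)\,w_j^m$. By the inductive hypothesis $|F_{k,\nu}|\leq A_\nu$ on $V_k$ for $2\leq\nu\leq n$, together with condition (\ref{eqn:polydiskcontV}) placing the circle $\{|w_j|=1/R\}$ above each point of $U_j\cap U_k^*$ inside $V_k$ (the omitted case $k=0,j=1$ being vacuous since $V_0\cap V_1=\emptyset$), the Cauchy integral formula yields $|G_{kj,\nu,m}(z_j)|\leq A_\nu R^m$ on $U_j\cap U_k^*$. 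Combined with $|F_{j,\mu}|\leq A_\mu$ from the induction and with (\ref{ineq:estim_of_fnu}) for $|f_{kj,\nu}|$, a term-by-term triangle inequality in the definitions of $H$ and $I$ gives, on $U_j\cap U_k^*$,
\[
|H_{kj,n+1}|\leq \left[\!\!\left[(P-X)\cdot\frac{RP}{1-RP}\right]\!\!\right]_{n+1},\qquad |I_{kj,n+1}|\leq \left[\!\!\left[\frac{MR^2 P^2}{1-RP}\right]\!\!\right]_{n+1},
\]
where $P(X):=X+\sum_{\mu=2}^n A_\mu X^\mu$.

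Adding these bounds and factoring gives $|J_{kj,n+1}|\leq \left[\!\!\left[\frac{RP}{1-RP}\bigl((1+MR)P-X\bigr)\right]\!\!\right]_{n+1}$. The formal series $(1+MR)P-X$ has nonnegative coefficients (its coefficient of $X$ equals $MR\geq 0$, and higher coefficients are $(1+MR)A_\mu\geq 0$), and is dominated coefficient-wise by $(1+MR)P$ since the difference $(1+MR)P-\bigl((1+MR)P-X\bigr)=X$ has nonnegative coefficients. Since $RP/(1-RP)$ also has nonnegative coefficients, multiplication preserves the domination, giving $|J_{kj,n+1}|\leq (1+MR)R\cdot [\![P^2/(1-RP)]\!]_{n+1}$ on $U_j\cap U_k^*$. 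Because $P\equiv A\pmod{X^{n+1}}$, one has $P^2/(1-RP)\equiv A^2/(1-RA)\pmod{X^{n+2}}$, so the $X^{n+1}$-coefficients coincide; substituting the identity $A_{n+1}=2KR(1+MR)\,[\![A^2/(1-RA)]\!]_{n+1}$ from (\ref{eq:func_eq_def_A}) yields $|J_{kj,n+1}|\leq A_{n+1}/(2K)$ on $U_j\cap U_k^*$. Finally, for any $p\in U_{jk}$ I would choose $\ell$ with $p\in U_\ell^*$ and apply the cocycle identity $J_{kj,n+1}=J_{\ell j,n+1}-J_{\ell k,n+1}$ (each summand being bounded by $A_{n+1}/(2K)$ at $p$ by the step just completed) to conclude $|J_{kj,n+1}(p)|\leq A_{n+1}/K$.

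The main obstacle is justifying the Cauchy estimate in the case $k=0$, where $w_j=w_0=e_{\rm sing}(y^2-x^3)$ is not one of the coordinates on $V_0$ centred at the cusp. For $j\in\{2,\dots,N\}$, however, the pair $(z_j,w_j)=(\sqrt{x},w_0)$ is a genuine holomorphic coordinate system on $V_j$ once a branch of $\sqrt{x}$ is fixed on each $U_j$, and restricts biholomorphically onto $V_j\cap V_0$; thus $F_{0,\nu}$ pulled back is a holomorphic function of $w_j$ on each slice $\{z_j=\mathrm{const}\}\cap V_0$, and the containment supplied by (\ref{eqn:polydiskcontV}) is precisely what allows the Cauchy integral over the circle $|w_j|=1/R$.
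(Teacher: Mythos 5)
Your proof is correct and follows essentially the same route as the paper's: Cauchy estimates for the $G_{kj,\nu,m}$ via the choice of $R$ in (\ref{eqn:polydiskcontV}), a coefficientwise majorant bound on $|H_{kj,n+1}|+|I_{kj,n+1}|$, the functional equation (\ref{eq:func_eq_def_A}) yielding the bound $A_{n+1}/(2K)$ on each $U_j\cap U_k^*$, and the cocycle identity with an index $\ell$ satisfying $p\in U_\ell^*$ to double this to $A_{n+1}/K$ on all of $U_{jk}$. The only (cosmetic) difference is that you carry the truncated polynomial $P$ through the estimate and pass to $A$ via a congruence modulo $X^{n+2}$, whereas the paper dominates $P$ coefficientwise by $A$ from the start; both steps are valid.
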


\begin{proof}
First note that it follows from the choice of $R$ given in (\ref{eqn:polydiskcontV}) and the Cauchy estimates that  
\[
\sup\{|G_{kj, \nu, m}(p)| \mid p\in U_j\cap U_k^* \} \leq A_\nu R^m. 
\]
On $U_j\cap U_k^*$, one has 
\begin{align*}
|H_{kj, n+1}| &\leq  
\left[\!\!\left[\sum_{\nu=2}^n \sum_{m=1}^\infty A_\nu R^m\cdot \left(X + \sum_{\mu=2}^n A_\mu\cdot X^\mu\right)^m\cdot X^\nu \right]\!\!\right]_{n+1} \\
&=
\left[\!\!\left[ \left(\sum_{\nu=2}^n A_\nu X^\nu\right)\cdot \left(\sum_{m=1}^\infty  R^m\cdot \left(X + \sum_{\mu=2}^n A_\mu\cdot X^\mu\right)^m\right) \right]\!\!\right]_{n+1} \\
&\leq  
\left[\!\!\left[ \left(\sum_{\nu=2}^\infty A_\nu X^\nu\right)\cdot \left(\frac{R A(X)}{1-R A(X)}\right) \right]\!\!\right]_{n+1} 
\leq \left[\!\!\left[ \frac{R(A(X))^2}{1-RA(X)}\right]\!\!\right]_{n+1}, 
\end{align*}
and from (\ref{ineq:estim_of_fnu}), one has $I_{0j, n+1}\equiv 0$ when $k=0$, and 
\[
|I_{kj, n+1}| \leq 
\left[\!\!\left[ \sum_{\nu=2}^\infty MR^\nu \cdot (A(X))^\nu \right]\!\!\right]_{n+1} 
=\left[\!\!\left[ \frac{MR^2(A(X))^2}{1-RA(X)} \right]\!\!\right]_{n+1}  
\]
when $k>0$. Therefore, we have 
\[
|J_{kj, n+1}| \leq |H_{kj, n+1}|+|I_{kj, n+1}| \leq \left[\!\!\left[ \frac{R(1+MR)(A(X))^2}{1-RA(X)} \right]\!\!\right]_{n+1}  
\]
on $U_j\cap U_k^*$. 
Since $\{U_0^*, U_1^*, \dots, U_N^*\}$ is an open covering of $C$, 
for each point $p\in U_j\cap U_k$, there exists $\ell\in \{0, 1, 2, \dots, N\}$ such that $p\in U_\ell^*$. 
Then the above estimate shows that 
\[
|J_{kj, n+1}(p)| =|J_{\ell j, n+1}(p) - J_{\ell k, n+1}(p)| 
\leq 2\cdot \max_{a, b}\sup_{U_a\cap U_b^*} |J_{ab, n+1}| \leq \frac{A_{n+1}}{K},
\]
which proves the lemma. 
\end{proof}

\subsection{Proof of Theorem \ref{thm:neighborhood_of_infinite_type}}

For each $j \geq 0$, it follows from Proposition \ref{prop:constr_of_Fjnu} (i), (ii) that 
there exists an open neighborhood $W_j^0 \subset V_j$ of $U_j$ such that the function 

\[
H_j\colon W_j^0 \times \mathbb{C} \to \mathbb{C}, \quad H_j(p, Y) := -w_j(p) + Y + \sum_{\nu=2}^\infty F_{j, \nu}(p)\cdot Y^\nu. 
\]
is holomorphic. 
Moreover for each $p\in U_j$, one has $H_j(p, 0)=0$ and 
\[
\left.\left(\frac{\partial}{\partial Y}H_j(p, Y)\right)\right|_{Y=0} = 1 \not=0. 
\]
Hence the implicit function theorem says that there exists a (Stein) open neighborhood $W_j \subset W_j^0$ of $U_j$ 
and a holomorphic function $u_j : W_j \to \mathbb{C}$ such that $u_j(p)=0$ for each $p\in U_j$ and 
$H_j(p, u_j(p))\equiv 0$, which means that  
\[
w_j = u_j + \sum_{\nu=2}^\infty F_{j, \nu}\cdot u_j^\nu
\]
holds. Proposition \ref{prop:constr_of_Fjnu} (iii) shows that $u_k=u_j$ holds on $W_j \cap W_k$ 
for any $j \geq 1$ and $k \geq 0$, which prove the theorem. 
\qed

\section{Proof of Theorems \ref{thm:main} and \ref{thm:cpt}}\label{section:proof}

\subsection{A conclusion from an argument by changing the model}\label{section:prfpropkey}

In this subsection, we show the following: 
\begin{proposition}\label{prop:key}
Let $C$ and $S$ be as in \S \ref{section:notation}. Assume that $N_{C/S}$ is topologically trivial. 
Then one (and only one) of the following holds: 
\begin{enumerate}
\item[$(a)$] There exists a neighborhood $V$ of $C$ in $S$ and an elliptic fibration $f\colon V \to \Delta$ onto the unit disc $\Delta\subset \mathbb{C}$ such that $C$ is a fiber of $f$. \\
\item[$(b)$] There exists a positive rational number $\ell\in\textstyle\frac{1}{6}\mathbb{Z}$ such that the following assertions $(1)$ and $(2)$ hold: \\
\begin{enumerate}
\item[$(1)$] For each real number $\lambda>1$, there exists a neighborhood $V$ of $C$ and a strongly plurisubharmonic function $\Phi_\lambda\colon V\setminus C\to \mathbb{R}$ such that $\Phi_\lambda(p)$ increases with the same order as $|f_C(p)|^{-\lambda \ell}$ when $p$ approaches $C$ on a neighborhood of each point of $C$, where $f_C$ is a local defining function of $C$. \\
\item[$(2)$] Let $V$ be a neighborhood of $C$ and $\Psi$ be a plurisubharmonic function on $V\setminus C$. If there exists a real number $0<\lambda<1$ such that $\Psi(p)=o(|f_C(p)|^{-\lambda\ell})$ holds on a neighborhood of each point of $C$, then $\Psi|_{V_0\setminus C}$ is a constant function for a neighborhood $V_0$ of $C$ in $V$. 
\end{enumerate}
\end{enumerate}
\end{proposition}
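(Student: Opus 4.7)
The plan is to adapt the argument of \cite[\S 7]{K17} to the cuspidal setting by changing the model via a sequence of blowups, a suitable cyclic branched covering, and a blowdown, so that a neighborhood of $C$ in $S$ becomes related to a neighborhood of a \emph{non-singular} elliptic curve $E$ with topologically trivial normal bundle in an auxiliary surface $\widehat{S}$. On the smooth elliptic side, Ueda's classical results in \cite{U83} apply, and I would transport the conclusions back to $(C,S)$ through the model change.

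Concretely, I would first resolve the cusp by three successive blowups $\pi\colon \widetilde{S}\to S$, obtaining a chain of exceptional divisors together with the strict transform $\widetilde{C}$ of $C$. Then I would construct a cyclic branched covering $\nu\colon \widehat{S}\to \widetilde{S}$, of degree dividing $6$ and branched along a carefully chosen divisor supported in $\widetilde{C}$ and the exceptional chain, so that the reduced preimage of $\widetilde{C}$ contains a smooth elliptic curve $E$ in $\widehat{S}$ with topologically trivial normal bundle. The appearance of $6$ is natural: the normalization of $C$ is parametrized near $p_0$ as $\zeta\mapsto (\zeta^2,\zeta^3)$ with $\mathrm{lcm}(2,3)=6$, and a suitable degree-$6$ ramified cover straightens the $(2,3)$-cusp into a smooth point on a genus-$1$ curve. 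The rational number $\ell \in \frac{1}{6}\mathbb{Z}$ in the conclusion reflects the local degree of $\nu\circ \pi$ near the points lying over $p_0$, which quantifies the relation $|f_E|\sim |f_C|^{\ell}$ between local defining functions on the two models.

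Next I would apply \cite[Theorems 1, 2, 3]{U83} to $(E,\widehat{S})$, possibly combined with Theorem~\ref{thm:neighborhood_of_infinite_type} after reducing to the holomorphically trivial case on the elliptic side. If $N_{E/\widehat{S}}$ is holomorphically trivial and $(E,\widehat{S})$ is of infinite type, a neighborhood of $E$ admits an elliptic fibration with $E$ as a fiber; descending this fibration through $\nu$ and $\pi$ yields an elliptic fibration on a neighborhood of $C$ in $S$ with $C$ as a fiber, giving case $(a)$. Otherwise, Ueda's theorem provides strongly plurisubharmonic functions on punctured neighborhoods of $E$ with growth of order $|f_E|^{-\lambda}$ for each $\lambda>1$, together with the rigidity statement for psh functions of slower growth. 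Pulling these back and pushing them down through the model change produces $\Phi_\lambda$ on $V\setminus C$ with growth $|f_C|^{-\lambda\ell}$, giving case $(b)$; the rigidity assertion $(2)$ transfers in the same way, since a psh function on $V\setminus C$ growing slower than $|f_C|^{-\lambda\ell}$ pulls back to a psh function on a punctured neighborhood of $E$ growing slower than $|f_E|^{-\lambda}$, which must be constant. If $N_{C/S}$ is topologically but not holomorphically trivial, case $(a)$ is automatically excluded by Lemma~\ref{lem:fibr_case_normal_bdl_triv}, and $(E,\widehat{S})$ falls into the corresponding non-trivial situation on the smooth elliptic side.

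The main obstacle will be verifying that strong plurisubharmonicity and the precise growth estimates survive the transfer through both the branched cover, where the ramification locus could \emph{a priori} interfere with strong convexity and must be compensated for, and the blowdown, where $\pi_{*}$ of a psh function has to be analyzed along the entire exceptional chain above $p_0$. A second technical point is the careful matching of the new Ueda type of $(C,S)$ from \S\ref{section:obst} with the classical Ueda type of $(E,\widehat{S})$: establishing this correspondence cleanly is essential so that the dichotomy $(a)/(b)$ produced by the construction aligns with the infinite/finite type split used in Theorem~\ref{thm:main}.
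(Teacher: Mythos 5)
Your overall route---resolve the cusp by three blow-ups, take a degree-$6$ branched cover, blow down to reach a smooth elliptic curve with trivial normal bundle, apply Ueda's Theorems 1--3, and transfer back---is exactly the paper's strategy (following \cite[\S 7]{K17}). However, two details as written would fail. First, you misidentify where the elliptic curve comes from: since $\pi^*C = 6C_1 + 3E_1 + 2E_2 + E_3$ with the strict transform $E_3$ of multiplicity $1$, the reduced preimage of $\widetilde{C}=E_3$ under the $6{:}1$ cover is a single totally ramified \emph{rational} $(-1)$-curve that gets contracted; the genus-$1$ curve is $p^{-1}(C_1)$, lying over the multiplicity-$6$ exceptional component $C_1$ (the induced cover $p^{-1}(C_1)\to C_1\cong\mathbb{P}^1$ has ramification $2,3,6$ over the three intersection points, and Riemann--Hurwitz gives genus $1$). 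Moreover its normal bundle is topologically trivial only \emph{after} the blow-down, since $(p^{-1}(C_1))^2=-6$ in $\widetilde{W}$. Second, your dichotomy silently requires $N_{\overline{D}/\overline{W}}$ to be holomorphically trivial: if the elliptic-side normal bundle were flat but non-trivial, your ``otherwise'' case would include infinite-type situations (Ueda's classes $(\beta)$, $(\gamma)$) to which neither \cite[Theorem 3]{U83} nor \cite[Theorems 1, 2]{U83} applies, and the two-case split would not be exhaustive. The paper gets this triviality by construction (cf.\ \cite[Proposition 7.8]{K17}); you only hedge (``possibly combined with\dots after reducing to the holomorphically trivial case''), so this must be established, not assumed.

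The more conceptual misstep is your final claim that matching the Ueda type of $(C,S)$ with that of the elliptic pair ``is essential.'' It is neither needed for Proposition \ref{prop:key} nor available: the paper proves only $n\le\widetilde{n}$ and $\overline{n}\le\widetilde{n}$ and explicitly leaves the relation between $n$ and $\overline{n}$ open (Question \ref{question:comparison_ueda_types}). The dichotomy of the Proposition is governed solely by the type $\overline{n}$ of $(\overline{D},\overline{W})$, with $\ell=\overline{n}/6$; note that your ``local degree'' heuristic explains only the denominator $6$, not the numerator $\overline{n}$, which is the elliptic-side Ueda type entering through the growth exponent $|h|^{-\lambda\overline{n}}$ in \cite[Theorem 1]{U83}. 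The alignment with the finite/infinite type of $(C,S)$ is handled only later, in the proof of Theorem \ref{thm:main}, via Theorem \ref{thm:neighborhood_of_infinite_type} and the mutual exclusivity of $(a)$ and $(b)$ (plurisubharmonic functions are fiberwise constant by the maximum principle---a point your proposal also omits, although the statement asserts ``only one''). Finally, on the transfer mechanics you flag as the main obstacle: the paper descends objects by making them invariant under the deck group of $p$, taking $\prod_{F}h\circ\overline{p}\circ F$ (then Lemma \ref{lem:conti_map_with_cpt_inv} and Stein factorization) for case $(a)$ and $\sum_{F}\varphi_\lambda\circ\overline{p}\circ F$ for $(b)(1)$; since all transfers occur on complements of ${\rm supp}\,\pi^*C$, where $p$ is an unramified covering and $\pi$, $\overline{p}$ are biholomorphisms, your worry that ramification could destroy strong plurisubharmonicity does not in fact arise.
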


Although the proof of this proposition is essentially given in \cite[p. 875]{K17}, we here explain it again for the convenience of readers (See also the appendix to the present paper). 
The proof is done by applying Ueda's theorems \cite{U83} to a new pair $(\overline{D}, \overline{W})$ of a surface $\overline{W}$ and a non-singular curve $\overline{D}\subset \overline{W}$ obtained from the original pair $(C, S)$ by taking a blowing-up, a finite (ramified) covering, and a blowing-down. 

Denote by $\pi\colon X\to S$ the (minimal) resolution of the ordinary cusp to a normal crossing divisor obtained by blowing-up three times, and by $D$ the pull-back $\pi^*C$. Then one has 
\[
D = 6 C_1 + 3 E_1 + 2E_2 + E_3 
\]
for irreducible divisors $C_1, E_1, E_2$, and $E_3$, where $E_3$ is the strict transform $(\pi^{-1})_*C$ of $C$. 
By \cite[Proposition 7.6, 7.7]{K17}, one can construct a $6: 1$ covering map $p\colon \widetilde{W}\to W$ on a neighborhood $W$ of the support ${\rm supp}\,D$ of $D$ in $X$ and a simple normal crossing divisor
\[
\widetilde{D} = p^{-1}(C_1) + \widetilde{E}_1^{(1)} + \widetilde{E}_1^{(2)} + \widetilde{E}_1^{(3)} + \widetilde{E}_2^{(1)} + \widetilde{E}_2^{(2)} + \widetilde{E}_3^{(1)}
\]
of $\widetilde{W}$ such that $p^*D=6\widetilde{D}$ and that each $\widetilde{E}_\nu^{(\lambda)}$ is a $(-1)$-curve. By Castelnuovo's contraction theorem, we have a morphism $\overline{p}\colon \widetilde{W}\to \overline{W}$ onto a non-singular surface $\overline{W}$ which coincides with the blow-up morphism centered at $6$ points in the (smooth) elliptic curve $\overline{D}:=\overline{p}(p^{-1}(C_1))$. 
Note that, by construction, the normal bundle $N_{\overline{D}/\overline{W}}$ is holomorphically trivial. 
\begin{proof}[Proof of Proposition \ref{prop:key}]
The proof is by cases (based on the type of not $(\widetilde{D}, \widetilde{W})$ but $(\overline{D}, \overline{W})$, see also the appendix to the present paper). 
Denote by $\overline{n}$ the type of the pair $(\overline{D}, \overline{W})$. 

First let us consider the case of $\overline{n} = \infty$. 
As $N_{\overline{D}/\overline{W}}$ is holomorphically trivial, there exists a holomorphic function $h$ on $\overline{W}$, by shrinking $W$, $\widetilde{W}$, and $\overline{W}$ if necessary, such that the zero divisor ${\rm div}(h)$ of $h$ coincides with $\overline{D}$ 
by virtue of \cite[Theorem 3]{U83}. 
As $p^*\pi^*C=p^*D=6\widetilde{D}=6\overline{p}^*\overline{D}$, 
there exists a holomorphic function $g$ on $V' := \pi(W)$ such that ${\rm div}(g)=C$ and 
\[
g\circ \pi\circ p = \prod_{F\in {\rm Aut}(\overline{p})}h \circ \overline{p}\circ F, 
\]
where ${\rm Aut}(\overline{p})$ is the group of deck transformations of $\overline{p}\colon \widetilde{W}\to \overline{W}$. 
As the restriction of $g$ to a neighborhood $V$ of $C$ is proper by Lemma \ref{lem:conti_map_with_cpt_inv}, we obtain a fibration $f$ on $V$ by considering the Stein factorization. 
Since $\overline{D}$ is an elliptic curve, connected components of a general fiber of $h$ are also elliptic curves. 
Therefore $f$ is an elliptic fibration, from which the assertion $(a)$ follows. 

Next let us consider the case of $\overline{n} <\infty$. 
In this case, we show the assertions $(1)$ and $(2)$ in $(b)$ for $\ell := \overline{n}/6$. Take a positive integer $\lambda$. 
When $\lambda>1$, by \cite[Theorem 1]{U83}, there exists a strongly plurisubharmonic function $\vp_{\lambda}$ on $\overline{W}\setminus \overline{D}$ (by shrinking $W$, $\widetilde{W}$, and $\overline{W}$ if necessary) such that $\vp_{\lambda}(p)$ increases with the same order as $|h(p)|^{-\lambda \overline{n}}$ when $p$ approaches $\overline{D}$ on a neighborhood of each point of $\overline{D}$, where $h$ is a local defining function of $\overline{D}$. 
Then 
\[
\widetilde{\vp}_\lambda := \sum_{F\in {\rm Aut}(\overline{p})}\vp_\lambda \circ \overline{p}\circ F 
\]
is an ${\rm Aut}(\overline{p})$-invariant strongly plurisubharmonic function on $\widetilde{W}\setminus \widetilde{D}$ which increases with the same order as $|\widetilde{h}(p)|^{-\lambda \overline{n}}$ when $p$ approaches $\widetilde{D}$ on a neighborhood of each point of $\widetilde{D}$, where $\widetilde{h}$ is a local defining function of $\widetilde{D}$. 
Therefore there exists a function $\Phi_\lambda$ as in the assertion $(1)$ on a neighborhood of $C$, since $p^*\pi^*C=p^*D=6\widetilde{D}$. 
When $\lambda<1$, take a function $\Psi$ as in the assertion $(2)$. 
Then, by the same argument as above, we have a plurisubharmonic function $\psi$ on $\overline{p}((\pi\circ p)^{-1}(V))\setminus \overline{D}$ such that $\Psi\circ \pi\circ p = \vp \circ\overline{p}$. 
Again by $p^*\pi^*C=p^*D=6\widetilde{D}$ and $\widetilde{D}=\overline{p}^*\overline{D}$, we have $\vp(p)=o(d(p, \overline{D})^{-\lambda \overline{n}})$ as $p\to \overline{D}$, where $d(p, \overline{D})$ is the (local) Euclidean distance from $p$ to $\overline{D}$. Thus the assertion $(2)$ follows from \cite[Theorem 2]{U83}. 

Note that the assertions $(a)$ and $(b)$ cannot hold simultaneously, since any plurisubharmonic function on $V\setminus C$ is $f$-fiberwise constant by the maximum principle if $(a)$ holds. 
\end{proof}

\subsection{Proof of Theorem \ref{thm:main}}
When $N_{C/S}$ is not holomorphically trivial, 
it follows from (the contraposition of) Lemma \ref{lem:fibr_case_normal_bdl_triv} that Proposition \ref{prop:key} $(a)$ never holds. Thus Proposition \ref{prop:key} $(b)$ holds in this case. 
In what follows, we show the theorem by assuming that $N_{C/S}$ is holomorphically trivial. Denote by $n$ the type of the pair $(C, S)$. 
When $n=\infty$, it follows from Theorem \ref{thm:neighborhood_of_infinite_type} that Proposition \ref{prop:key} $(a)$ holds. 
When $n<\infty$, it follows from Lemma \ref{lem:well-def_type} that 
Proposition \ref{prop:key} $(a)$ never holds, from which we have the assertion $(b)$. 
\qed

\subsection{Proof of Theorem \ref{thm:cpt}}
First let us consider the case where $N_{C/S}$ is holomorphically trivial and the pair $(C, S)$ is of infinite type. 
In this case, by virtue of Theorem \ref{thm:main} $(i)$, there exists an elliptic fibration $P\colon V\to \Delta$ on a neighborhood $V$ of $C$ in $S$. 
Take a general fiber $Y=P^{-1}(q)$ of $P$. 
Then $Y$ is a non-singular compact connected hypersurface of $S$ whose normal bundle is holomorphically trivial. 
Let $r$ be a sufficiently small positive number and $\chi\colon \mathbb{R}\to \mathbb{R}$ be a non-decreasing convex function of class $C^\infty$ such that $\chi(t)=t$ for $t\in (-\log r, \infty)$ and that $\chi\equiv c$ holds for a constant $c\in\mathbb{R}$ on $(-\infty, -\textstyle\frac{1}{2}\log r)$. 
Denote by $\psi$ the function on $S\setminus Y$ defined by 
\[
\psi(x) := \begin{cases}
\chi(-\log|P(x)-q|) & \text{if}\ x \in V\\
c & \text{if}\ x\not\in V
\end{cases}. 
\]
Then, by considering the function $x\mapsto \log(1+e^{\psi(x)})$, it follows from \cite[Proposition 2.1]{K22} that the line bundle $[Y]$ on $S$ is semi-positive: i.e. $[Y]$ admits a $C^\infty$ Hermitian metric whose Chern curvature is positive semi-definite at any point of $S$. Thus, 
by \cite[Theorem 1.1 $(i)$]{K22}, 
there exists a fibration $f\colon S \to R$ onto a compact Riemann surface $R$ such that $Y$ is a fiber of $f$. 
As the image $f(C)$ is a connected analytic subset of $R$, it follows that either $f(C)$ is a point or $f(C)=R$. As $C\cap Y=\emptyset$ and $Y$ is a fiber of $f$, the image $f(C)$ never includes the image of $Y$, from which the assertion $(i)$ follows. 
As $S\setminus C$ is the total space of an elliptic fibration over an open Riemann surface, we have that $S\setminus C$ is holomorphically convex in this case. 
In the other case, it follows from the assertion $(1)$ of 
Theorem \ref{thm:main} $(ii)$ that there exist a neighborhood $V$ of $C$ and a strongly plurisubharmonic function $\Phi\colon V\setminus C\to \mathbb{R}$ such that $\Phi(p)\to \infty$ as $p\to C$. 
Fix a relatively compact neighborhood $V_0$ of $C$ in $V$. 
Set $c:=\max\{1, \max_{\partial V_0}\Phi\}$ and denote by $\varphi$ the function on $S\setminus C$ defined by 
\[
\varphi(x) := \begin{cases}
\max\{2c, \Phi(x)\} & \text{if}\ x\in V_0\setminus C\\
2c & \text{if}\ x\in S\setminus V_0
\end{cases}. 
\]
Then, by construction, we have that $\varphi$ is an exhaustion plurisubharmonic function on $S\setminus C$ which is strongly plurisubharmonic on a neighborhood of the boundary of $S\setminus C$. 
Thus $S\setminus C$ is strongly $1$-convex, and it follows from Grauert's theorem \cite{G} that $S\setminus C$ is a proper modification of a Stein space, from which it follows that it is holomorphically convex. 

Finally let us show the equivalence of the assertions $(1), (2)$, and $(3)$. 
The implication $(2)\implies (1)$ has been already proven. 
To show the implication $(1)\implies (3)$, assume the assertion $(i)$. 
Then $[C]=p^*L$ holds for a line bundle $L$ on $R$ of positive degree. 
As $L$ has a Hermitian metric $h_L$ whose Chern curvature is positive, the assertion $(3)$ follows by considering $p^*h_L$. 
Finally we show the implication $(3)\implies (2)$. 
We show this by contradiction. 
Assume that $(2)$ does not hold and $(3)$ holds. 
Then, by using a Hermitian metric $h$ on $[C]$ with $\sqrt{-1}\Theta_h\geq 0$, one can construct a plurisubharmonic function $\vp\colon S\setminus C\to \mathbb{R}$ by letting
\[
\vp(x) := -\log |f_C(x)|_h^2, 
\]
where $f_C$ is the canonical section of $[C]$. 
As $\vp$ increases with the logarithmic order along $C$, 
a contradiction follows from the assertion $(2)$ of Theorem \ref{thm:main} $(ii)$. 
\qed

\section{Examples and further discussions}\label{section:examples}

\subsection{A rational curve with an ordinary cusp in the blow-up of $\mathbb{P}^2$ at $9$ points}

In this subsection, we let $S$ be a rational surface obtained by blowing-up $\mathbb{P}^2$ at $9$ points such that the linear system of its anti-canonical divisor includes a rational curve $C$ with an ordinary cusp. 
Note that $N_{C/S}$ is topologically trivial. 
For this example, we have a sufficient and necessary condition for the assertion $(i)$ in Theorem \ref{thm:cpt} to hold as follows: 
\begin{lemma}\label{lem:ex_P2bl9pts}
Let $S$ be a rational surface obtained by blowing-up $\mathbb{P}^2$ at $9$ points such that the linear system of its anti-canonical divisor includes a rational curve $C$ with an ordinary cusp. 
Then the assertion $(i)$ in Theorem \ref{thm:cpt} holds if and only if $N_{C/S}$ is holomorphically trivial. 
\end{lemma}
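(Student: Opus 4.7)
The ``only if'' direction is immediate and I would dispatch it first: assuming an elliptic fibration $f\colon S \to R$ with $C$ as a fiber, I would restrict $f$ to the preimage of a sufficiently small coordinate disc $\Delta \subset R$ around $f(C)$ and invoke Lemma \ref{lem:fibr_case_normal_bdl_triv} to conclude that $N_{C/S}$ is holomorphically trivial.

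For the ``if'' direction, my plan is to construct the fibration directly from the anti-canonical pencil, bypassing the machinery of Ueda types. Since $C$ belongs to $|-K_S|$, one has $[C]=-K_S$ and in particular $K_S^2=C^2=0$. From the short exact sequence
\[
0 \to \mathcal{O}_S \to \mathcal{O}_S(-K_S) \to N_{C/S} \to 0
\]
combined with $H^1(S,\mathcal{O}_S)=0$ (since $S$ is rational), I would derive
\[
h^0(S,-K_S)=1+h^0(C,N_{C/S}).
\]
Under the hypothesis that $N_{C/S}$ is holomorphically trivial, $h^0(C,N_{C/S})=1$, and so $h^0(S,-K_S)=2$; that is, $|-K_S|$ is a pencil containing $C$.

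The next step is to verify that this pencil is base-point-free. Any two distinct members of $|-K_S|$ intersect with intersection number $(-K_S)^2=0$ and, being effective, must therefore be disjoint, which precludes any base points. The resulting morphism $f\colon S \to \mathbb{P}^1$ then realizes $C$ as a scheme-theoretic fiber via the usual correspondence between base-point-free pencils and maps to $\mathbb{P}^1$. Adjunction gives $p_a(-K_S)=1+\tfrac{1}{2}(-K_S)\cdot(-K_S+K_S)=1$, and Bertini's theorem applied to the base-point-free pencil ensures that the generic fiber is smooth, hence a smooth elliptic curve. Consequently, $f$ is the desired elliptic fibration onto the compact Riemann surface $R=\mathbb{P}^1$.

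The argument is quite short and I anticipate no serious obstacle. The only mildly delicate point will be confirming that $C$ appears as a scheme-theoretic fiber rather than merely as a member of $|-K_S|$, but this is automatic from the construction of the morphism associated to the base-point-free pencil (the section of $\mathcal{O}_S(-K_S)$ cutting out $C$ pulls back the reduced point of $\mathbb{P}^1$ to $C$ itself).
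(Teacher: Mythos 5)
Your proof is correct and takes essentially the same route as the paper: both exploit $H^1(S,\mathcal{O}_S)=0$ together with the restriction sequence for $\mathcal{O}_S(C)=\mathcal{O}_S(-K_S)$ to produce a second anticanonical section disjoint from $C$, and the resulting pencil (the paper phrases it as the meromorphic function $\sigma_C/G$, where $G$ extends a nowhere-vanishing section of $N_{C/S}$, making ${\rm div}(G)\cap C=\emptyset$ automatic) defines the fibration. The only point to tighten in your version is the base-point-freeness step: intersection number $(-K_S)^2=0$ forces disjointness only after noting that the irreducible reduced member $C$ cannot be a component of any other member of $|-K_S|$ (if $D=C+E$ with $E\ge 0$ effective and $E\sim 0$, then $E=0$), which is a one-line fix.
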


\begin{proof}
By Lemma \ref{lem:fibr_case_normal_bdl_triv}, $N_{C/S}$ is holomorphically trivial if the assertion $(i)$ in Theorem \ref{thm:cpt} holds. 
Assume $N_{C/S}$ is holomorphically trivial. Then, from the short exact sequence obtained by tensoring $\mathcal{O}_S(C)$ with the defining short exact sequence of the defining ideal sheaf $I_C (\cong \mathcal{O}_S(-C))$ of $C$, the following exact sequence is induced: 
\begin{equation}\label{eq:exseq_in_prf_lem:ex_P2bl9pts}
H^0(S, \mathcal{O}_S(C)) \to H^0(C, \mathcal{O}_S(C)|_C) \to H^1(S, \mathcal{O}_S). 
\end{equation}
Since it follows from the assumption that $\mathcal{O}_S(C)|_C=\mathcal{O}_C(N_{C/S})\cong \mathcal{O}_C$, there exists a nowhere vanishing section $g\in H^0(C, \mathcal{O}_S(C)|_C)$. 
As $S$ is rational, one has $H^1(S, \mathcal{O}_S)=0$. 
Thus, by the exactness of (\ref{eq:exseq_in_prf_lem:ex_P2bl9pts}), one can find a section $G\in H^0(S, \mathcal{O}_S(C))$ such that $G|_C = g$. In particular, the zeros of $G$ never intersect $C$. 
The fibration $f\colon S\to R$ as in 
Theorem \ref{thm:cpt} $(i)$ can be constructed from the meromorphic function 
\[
S \ni x  \mapsto \frac{\sigma_C(x)}{G(x)}, 
\]
where $\sigma_C\in H^0(S, \mathcal{O}_S(C))$ is the canonical section. 
\end{proof}

Motivated by the results and examples due to Kazama and Takayama \cite{KT1} \cite{KT2}, 
$\partial\overline{\partial}$-problem (whether $\partial\overline{\partial}$-lemma holds) on $S\setminus C$ was investigated in \cite{K23} for the blow-up $S$ of $\mathbb{P}^2$ at $9$ points and an anti-canonical divisor $C$ when $C$ is non-singular. 
Related to this example, we believe it is worth asking the following question for $(C, S)$ as in Lemma \ref{lem:ex_P2bl9pts}:
\begin{question}
Let $S$ and $C$ be as in Lemma \ref{lem:ex_P2bl9pts}. 
Does $\partial\overline{\partial}$-lemma hold (in the sense of \cite{KT1}) on $S\setminus C$? 
\end{question}

\subsection{Further discussions}\label{subsection_futherdiscuss}

First let us pose the following: 
\begin{question}
Does there exist an example $(C, S)$ of a rational curve $C$ with an ordinary cusp in a non-singular complex surface $S$ such that $N_{C/S}$ is holomorphically trivial and the pair $(C, S)$ is of finite type? 
\end{question}

It seems worth asking this question also under the assumption that $S$ is compact (and K\"ahler, or projective), since at least the rational surface $S$ as in the previous subsection never admits a rational curve $C$ with an ordinary cusp such that $N_{C/S}$ is holomorphically trivial and the pair $(C, S)$ is of finite type by virtue of Lemma \ref{lem:ex_P2bl9pts}. 

Next, using the notation in \S \ref{section:prfpropkey}, 
let us compare the type $n$ of the pair $(C, S)$, 
the type $\widetilde{n}$ of $(\widetilde{D}, \widetilde{W})$ (in the sense of \cite{K17}), 
and $\overline{n}$ of $(\overline{D}, \overline{W})$. 

Take an open covering $\{V_j\}$ of a neighborhood of $C$ in $S$ as in \S \ref{section:notation} and a defining function $w_j\colon V_j\to \mathbb{C}$ of $C\cap V_j$ for each $j$ such that $\{(V_j, w_j)\}_j$ is of type $n$. 
Then, by considering the refinement of the open covering $\{(\pi\circ p)^{-1}(V_j)\}_j$ and the restrictions of brunches of $(w_j\circ \pi\circ p)^{1/6}$, one can construct a system of local defining functions of $\widetilde{D}$ of type $n$ (in the sense of \cite{K17}). 
Therefore 
\begin{equation*}
n \leq \widetilde{n}
\end{equation*}
holds. 
By applying the same argument to a system of local defining functions of $\overline{D}$ of type $\overline{n}$ and the morphism $\overline{p}\colon \widetilde{W}\to \overline{W}$, we have 
\begin{equation*}
\overline{n} \leq \widetilde{n}. 
\end{equation*}
Based on the above discussion, let us pose the following: 
\begin{question}\label{question:comparison_ueda_types}
How is the relation between $n$ and $\overline{n}$? 
\end{question}

Finally, let us discuss the consequence of Theorem \ref{thm:main} from the viewpoint of Ueda's classification \cite[\S 5]{U83} of neighborhoods of compact non-singular curves whose normal bundles are topologically trivial. 
Let $S$ and $C$ be as in Theorem \ref{thm:main}. 
We will restrict ourselves to the case where $N_{C/S}$ is holomorphically trivial in what follows 
(Recall that any topologically trivial line bundle on a compact non-singular curve is unitary flat (a theorem of Kashiwara, see \cite[\S 1]{U83} for example). 
We refer to \cite[Theorem 1, 2]{U91} and \cite[Theorem 1.6]{K17} for the case where a curve has nodes and the normal bundle is topologically trivial and holomorphically non-trivial). 

The case where the assertions $(i)$ and $(ii)$ of Theorem \ref{thm:main} hold can be naturally interpreted as an analogue of the classes ($\beta'$) and $(\alpha)$ in Ueda's classification, respectively. 
From this viewpoint, The assertion $(i)$ of Theorem \ref{thm:main} (and Theorem \ref{thm:neighborhood_of_infinite_type}) can be regarded as an analogue of \cite[Theorem 3]{U83}, and $(ii)$ can be regarded as an analogue of \cite[Theorem 1, 2]{U83}. 
As any unitary flat line bundle on $C$ is holomorphically trivial by Lemma \ref{lem:Pic0}, it can be said that there is no case which corresponds to the classes ($\beta''$) and ($\gamma$) in Ueda's classification for $(C, S)$.  

\begin{question}
How can we generalize Theorem \ref{thm:main} for a compact curve with (general) cusps embedded in a non-singular surface? Can one find an example which can be regarded as an analogue of the class ($\beta''$) or ($\gamma$) in Ueda's classification when $C$ is a compact curve with cusps? 
\end{question}





\vskip8mm

\appendix
\section{Erratum to ``Ueda theory for compact curves with nodes"}
This appendix aims to correct some errors in some statements and arguments in \cite{K17} which are mainly needed to show \cite[Theorem 7.1]{K17}. 
The statements of all the theorems in this paper, including Theorem 7.1, remain unchanged.

In the proof of Lemma 7.6, $(L')^a\otimes\mathcal{O}_W(\textstyle\sum_{\nu=1}^Na_\nu E_\nu)$ should read $(L')^{-a}\otimes\mathcal{O}_W(\textstyle\sum_{\nu=1}^Na_\nu E_\nu)$. 
In the statements of Propositions 7.8 and 7.9, $(\widetilde{W}, \widetilde{D})$ should read $(\overline{W}, \overline{D})$. 
The fourth and fifth sentences in the proof of Proposition 7.8 should be disregarded. 
In the assertion $(i)$ of Proposition 7.9, ``$\Phi_\lambda(p) = O(d(p, {\rm supp}\,D)^{-\lambda n/a})$ hold as $p\to {\rm supp}\,D$, where $d(p, {\rm supp}\,D)$ is the distance from $p$ to ${\rm supp}\,D$ calculated by using a local Euclidean metric on a neighborhood of a point of $C$ in $V$" should read ``$\Phi_\lambda(p) = O(|f_D(p)|^{-\lambda n/a})$ holds on a neighborhood of any point of $C$, where $f_D$ is a local defining function of $D$". 
Similarly, ``$\Psi(p)=O(d(p, {\rm supp}\,D)^{-\lambda n/a})$ as $p\to {\rm supp}\,D$" in Proposition 7.9 $(ii)$ should read ``$\Psi(p)=O(|f_D(p)|^{-\lambda n/a})$ holds on a neighborhood of any point of $C$". 
In p. 875 l. 22 and 27, $(\widetilde{W}, \widetilde{D})$ should read $(\overline{W}, \overline{D})$. 


\begin{thebibliography}{99}
\bibitem[GS]{GS} \textsc{X. Gong, L. Stolovitch}, Equivalence of Neighborhoods of Embedded Compact Complex Manifolds and Higher Codimension Foliations, Arnold Math J. {\bf 8} (2022), 61--145. 
\bibitem[G]{G} \textsc{H. Grauert}, \"Uber Modifikationen und exzeptionelle analytische Mengen, Math. Ann., {\bf146} (1962), 331--368. 
\bibitem[HM]{HM} \textsc{J. Harris, I. Morrison}, Moduli of curves, Graduate Texts in Mathematics {\bf 187}, Springer-Verlag, 1998.
\bibitem[KT1]{KT1} \textsc{H. Kazama, S. Takayama}, $\partial\overline{\partial}$-problem on weakly 1-complete K\"ahler manifolds, Nagoya Math. J. {\bf 155} (1999), 81--94. 
\bibitem[KT2]{KT2} \textsc{H. Kazama, S. Takayama}, On the $\partial\overline{\partial}$-equation over pseudoconvex K\"ahler manifolds, Manuscripta Math. {\bf 102} (2000), 25--39.
\bibitem[KS]{KS} \textsc{K. Kodaira, D. C. Spencer}, A theorem of completeness o f characteristic systems of complete continuous systems, Amer. J. Math., {\bf 81} (1959), 477-500. 
\bibitem[K1]{K17} \textsc{T. Koike}, Ueda theory for compact curves with nodes, Indiana U. Math. J, {\bf 66}, 3 (2017), 845--876. 
\bibitem[K2]{K20} \textsc{T. Koike}, Higher codimensional Ueda theory for a compact submanifold with unitary flat normal bundle, Nagoya Math. J., {\bf 238} (2020), 104--136. 
\bibitem[K3]{K22} \textsc{T. Koike}, On the complement of a hypersurface with flat normal bundle which corresponds to a semipositive line bundle, Math. Ann., {\bf 383} (2022), 291--313. 
\bibitem[K4]{K23} \textsc{T. Koike}, $\overline{\partial}$ cohomology of the complement of a semi-positive anticanonical divisor of a compact surface, arXiv:2308.03761. 
\bibitem[N]{N} \textsc{A. Neeman}, Ueda theory: theorems and problems, Mem. Amer. Math. Soc. {\bf 81}, no. 415 (1989).
 \bibitem[S]{S} \textsc{Y. T. Siu}, Every Stein subvariety admits a Stein neighborhood, Invent. Math., {\bf 38} (1976), 89--100.
 \bibitem[U1]{U83} \textsc{T. Ueda},  On the neighborhood of a compact complex curve with topologically trivial normal bundle, J. Math. Kyoto Univ., {\bf 22} (1983), 583--607. 
 \bibitem[U2]{U91} \textsc{T. Ueda}, Neighborhood of a rational curve with a node, Publ. RIMS, Kyoto Univ., {\bf 27} (1991), 681--693. 
  \bibitem[U3]{U} \textsc{T. Ueda}, On the neighborhood of a compact complex curve with singularities, to appear in RIMS kokyuroku (in Japanese).
\end{thebibliography}
\end{document}